\theoremstyle{plain}
\newtheorem{thm}{Theorem}[section]
\newtheorem{Cor}{Corollary}[section]
\newtheorem{lem}{Lemma}[section]
\theoremstyle{definition}
\title{Density measures and additive property}
\author{Ryoichi Kunisada}
\address{Department of Mathematical Science, Graduate School of Science and Engineering, Waseda University, Shinjuku-ku, Tokyo 169-8555, Japan}
\email{tk-waseda@ruri.waseda.jp}
\date{}
\begin{document}
\maketitle

\begin{abstract}
We consider a certain class of normalized positive linear functionals on $l^{\infty}$ which extend the Ces\`{a}ro mean. We study the set of its extreme points and it turns out to be the set of linear functionals constructed from free
ultrafilters on natural numbers $\mathbb{N}$. Also, regarding them as finitely additive measures defined on all subsets of $\mathbb{N}$, which are often called density measures, we study a certain additivity property of such measures being equivalent to the completeness of the $L^p$-spaces on such measures. Particularly a necessary and sufficient condition for such a density measure to have this property is obtained.
\end{abstract}

\bigskip

\section{Introduction}
Let us $l^{\infty}$ be the Banach space of all real-valued bounded functions on $\mathbb{N}$ and $L^{\infty}(\mathbb{R}_+^{\times})$ be the Banach space of all real-valued essentially bounded measureable functions on $\mathbb{R}_+^{\times} = [1, \infty)$. Let ${(l^{\infty})}^*$ and ${L^{\infty}(\mathbb{R}_+^{\times})}^*$ be their conjugate spaces respectively.
 The symbol $\mathcal{P}(\mathbb{N})$ stands for the family of all subsets of $\mathbb{N}$, and for a set $A \in \mathcal{P}(\mathbb{N})$ let $|A|$ denote the cardinality of $A$.
Recall that the Ces\`{a}ro mean of a function $f \in \l^{\infty}$ is defined as
\[C(f) = \lim_{n \to \infty} \frac{1}{n}\sum_{i=1}^n f(i) \]
if this limit exists. When $f$ is the characteristic function $I_A$ of a set $A \in \mathcal{P}(\mathbb{N})$, its Ces\`{a}ro mean $C(I_A) = D(A) =  \lim_{n \to \infty} \frac{|A \cap [1, n]|}{n}$ is often called asymptotic density of $A$.
We consider a class $\mathcal{C}$ of normalized positive linear functionals on $l^{\infty}$ concerning Ces\`{a}ro summability method. Namely, linear functionals $\varphi$ on $l^{\infty}$ satisfying the following condition:
\[\varphi(f) \le \overline{C}(f) = \limsup_{n \to \infty} \frac{1}{n} \sum_{i=1}^{n} f(i) \]
for each $f \in \l^{\infty}$. It is remarked that such a functional $\varphi$ is an extension of Ces\`{a}ro mean, that is, $\varphi(f) = C(f)$ provided the limit exists. $\mathcal{C}$ is a compact convex set in its weak* topology and hence by the Krein-Milman theorem, the set of extreme points $ex(\mathcal{C})$ of $\mathcal{C}$ is not an empty set. An example of such a functional is given by
\[ \varphi^{\mathcal{U}}(f) = \mathcal{U}\mathchar`-\lim_n \frac{1}{n} \sum_{i=1}^n f(i), \]
where $f \in l^{\infty}$ and the limit in the above definition means the limit along an free ultrafilter $\mathcal{U}$ on $\mathbb{N}$ (The precise definition of this notion is given in the following section). We denote the set of all such functionals by $\tilde{\mathcal{C}}$. Remark that there are distinct free ultrafilters $\mathcal{U}$ and $\mathcal{U}^{\prime}$ which give the same element of $\mathcal{C}$, thus $\tilde{\mathcal{C}}$ is isomorphic as a set to some quotient space of the set of free ultrafilters on $\mathbb{N}$. We show that each functional in $\tilde{\mathcal{C}}$ is equal to some $\varphi^{\mathcal{U}}$ with $\mathcal{U}$ is a certain kind of ultrafilter, which has a form convenient to investigate the associated functional. The relation between $\tilde{\mathcal{C}}$ and $\mathcal{C}$ can be understood simply in view of the theory of linear topological spaces, that is,  
$\tilde{\mathcal{C}}$ is precisely the set of extreme points $ex(\mathcal{C})$ of $\mathcal{C}$. Also we show that each element of $\mathcal{C}$ can be expressed as an integral with respect to some unique probability measure supported by its extreme points. It may be regarded as an interesting example concerning Choquet's theorem. For these purposes, it is useful to introduce an integral analogy $\mathcal{M}$ of $\mathcal{C}$ which is a class of normalized positive linear functionals on $L^{\infty}(\mathbb{R}_+^{\times})$ defined by using the subadditive functional $\overline{M}$ on $L^{\infty}(\mathbb{R}_+^{\times})$ which adopts the integral with respect to the Haar measure of real line $\mathbb{R}$ in place of the summation: namely, $\mathcal{M}$ is the set of linear functionals $\psi$ on $L^{\infty}(\mathbb{R}_+^{\times})$ for which
\[\psi(f) \le \overline{M}(f) = \limsup_{x \to \infty} \frac{1}{x} \int_1^x f(t) dt \]
holds for every $f \in L^{\infty}(\mathbb{R}_+^{\times})$. Similarly we define a subclass $\tilde{\mathcal{M}}$ of $\mathcal{M}$ consisting of those $\psi^{\mathcal{U}}$ defined by
\[ \psi^{\mathcal{U}}(f) = \mathcal{U}\mathchar`-\lim_x \frac{1}{x} \int_1^x f(t) dt, \]
where $\mathcal{U}$ is a ultrafilter on $\mathbb{R}_+^{\times}$ which contains no bounded set of $\mathbb{R}_+^{\times}$ and again the limit means the limit along $\mathcal{U}$. In fact, it turns out that $\mathcal{C}$ and $\mathcal{M}$ are isomorphic as a compact convex set and that definitions and results obtained in the integral setting can be transferred to the summation setting with ease. Therefore in Section 3 and 4, where we study these problems, we mainly work with the integral setting in which arguments are simpler. 
 
 Furthermore, in the integral setting, we can naturally consider the continuous flow on $\tilde{\mathcal{M}}$ induced by the action of the multiplicative group $\mathbb{R}^{\times} = (0, \infty)$ of positive real numbers $\mathbb{R}_+$ on $\mathbb{R}_+^{\times}$ defined as follows: let us consider a semiflow on $\mathbb{R}_+^{\times}$ as follows.
\[\rho^s : \mathbb{R}_+^{\times} \longrightarrow \mathbb{R}_+^{\times}, \quad \rho^s x = 
2^s x, \quad s \ge 0. \]
Then define linear operators $P_s$ as
\[ P_s : L^{\infty}(\mathbb{R}_+^{\times}) \longrightarrow L^{\infty}(\mathbb{R}_+^{\times}),
\quad (P_sf)(x) = f(\rho^sx), \quad s \ge0. \]
Let $P_s^*$ be the adjoint operators of $P_s$, then
\[P_s^* : \tilde{\mathcal{M}} \longrightarrow \tilde{\mathcal{M}}, \quad s \ge 0 \]
are homeomorphisms and $(\tilde{\mathcal{M}}, \{P_s^*\}_{s \in \mathbb{R}})$ is a continuous flow (The proof of this fact is given in Section 3). This flow plays a fundamental role in the last section.

Another subject of this paper is concerned with the notion of finitely additive measures. Recall that finitely additive measures defined on $\mathcal{P}(\mathbb{N})$ which extend the asymptotic density are called density measures. Notice that $\mathcal{C}$ can be considered to be a subclass of density measures when we recognize them as finitely additive measures defined on $\mathcal{P}(\mathbb{N})$, that is, restricting $\varphi \in \mathcal{C}$ to characteristic functions of sets in $\mathcal{P}(\mathbb{N})$, we obviously get a density measure $\nu$. In particular, we denote the corresponding density measure of $\varphi^{\mathcal{U}}$ by $\nu^{\mathcal{U}}$. Density measures have been studied by several authors from various points of view, see for instance [4, 5, 8, 9, 10, 12, 14]. Following [4], [6] and [10] we will deal with a certain additivity property of density measures, which is, roughly speaking, a weakening of countable additivity. This kind of property was studied firstly by Buck [6] and Mekler [10], who called it the additive property. The authors of [4] have also studied that property and a natural weakening of it. In this paper we shall deal with the latter alone, the definition of which is given generally as follows. Let $(X, \mathcal{F}, \mu)$ be a finitely additive finite measure space where $\mathcal{F}$ is a $\sigma$-algebra of subsets of $X$. We say that $\mu$ has the additive property if
for any increasing sequence $\{A_i\}_{i = 1}^{\infty}$ of $\mathcal{F}$, there exists a set $B \in \mathcal{F}$ such that 

\quad (1) $\mu(B) = \lim_{i \to \infty}  \mu(A_i)$,

\quad (2) $\mu(A_i \setminus B) = 0 \quad for \ every \ i = 1,2, \cdots$. 
\medskip

  In what follows, the \textit{additive property} will always mean this one. This property is tightly linked to the notion of $L^p$ spaces over finitely additive measures (see [3] for details). It is known that $L^1(\mu)$ is complete if and only if $\mu$ has the additive property (see for instance [1] and [2]).
 It was shown in [4, Theorem 1] that there exists a density measure with the additive property; namely, if a free ultrafilter $\mathcal{U}$ on $\mathbb{N}$ contains a set $\{n_k\}_{k=1}^{\infty}$
such that 
\[\lim_{k \rightarrow \infty} \frac{n_{k+1}}{n_k} = \infty \]
then the density measure $\nu^{\mathcal{U}}$ has the additive property. We shall generalize the result and prove a necessary and sufficient condition for density measures in $\tilde{\mathcal{C}}$ to have the additive property.

The paper is organized as follows: In the next section we introduce necessary notions and notation which will be used throughout the paper. Also we present some basic results on density measures, including the fact that the class $\mathcal{C}$ is a proper subset of the set of density measures. In Section 3 after proving the affine homeomorphism between $\mathcal{C}$ and $\mathcal{M}$, we show that each element of $\tilde{\mathcal{C}}$ has a special expression, which fact induces the topological structure of $\tilde{\mathcal{C}}$ that $\tilde{\mathcal{C}}$ is homeomorphic to a closed subset of the maximal ideal space of the space of all uniformly continuous bounded functions on $[1, \infty)$. 

In Section 4, we show the result that $\tilde{\mathcal{C}} = ex(\mathcal{C})$. Although it is relatively easy to show that $ex(\mathcal{C}) \subseteq \tilde{\mathcal{C}}$ by applying the Krein-Milman theorem, it is rather difficult to prove that $\tilde{\mathcal{C}}$ is exactly $ex(\mathcal{C})$ and we will have to prepare some amount of machinery. After that we show the representation theorem for general elements of $\mathcal{C}$.

Section 5 is devoted to the study of the additivity property of density measures in $\tilde{\mathcal{C}}$. Recurrence property of elements of $\tilde{\mathcal{C}}$ for the flow defined above will be used to characterize their additive property. Applying our results we shall show an example of a density measure in $\tilde{\mathcal{C}}$ which has the additive property but does not satisfy the above condition.

\section{Preliminaries}
 In the sequel, each measure is supposed to be a finitely additive probability measure defined on $\mathcal{P}(\mathbb{N})$.   Generally, giving a measure $\mu$, one can define a normalized positive linear functional $\varphi$ on $l^{\infty}$ in a similar way to the definition of Lebesgue integral. Conversely, take any normalized positive linear functional $\varphi$ on $l^{\infty}$, then we can obtain the measure $\mu$ by putting $\mu(A) = \varphi(I_A)$ for every $A \in \mathcal{P}(\mathbb{N})$. Therefore we can identify these two notions by this correspondence.

 Now let us consider the relation between $\mathcal{C}$ and the class of density measures. As we have mentioned above, to each density measure, there corresponds a normalized positive linear functional on $l^{\infty}$. It is shown in [8] that functionals corresponding to density measures are precisely the positive functionals extending 
Ces\`{a}ro mean. We denote the set of all such functionals by $\mathcal{P}$, which is clearly a weak* compact convex subset of $(l^{\infty})^*$. Then the following result is known [8, Proposition 5.5]:
\[\overline{P}(f) = \sup_{\varphi \in \mathcal{P}} \varphi(f) = \lim_{\theta \to 1-} \limsup_{n \to \infty} \frac{\sum_{i \in [\theta n, n]} f(i)} {n - \theta n} \]
for each $f \in l^{\infty}$. This functional $\overline{P}$ is an extension of  P\'{o}lya density for bounded sequences.

 Since $\overline{C}(f) \le \overline{P}(f)$ for every $f \in l^{\infty}$, it is obvious that $\mathcal{C} \subseteq \mathcal{P}$. And it is known that there exists a element $f$ of $l^{\infty}$ such that $\overline{C}(f) < \overline{P}(f)$ (for example, see [6, P. 572]), so we have that $\mathcal{C} \varsubsetneqq \mathcal{P}$. 

 To construct a linear functional which assigns a value to all bounded sequences we need the notion of the limit along an ultrafilter. We give below the definition in a general setting. Let $X$ be a set. Let $\mathcal{U}$ be a ultrafilter on $X$ and let $f : X \rightarrow \mathbb{R}$ be any bounded function. Then there exists a unique real number $\alpha$ such that for any $\varepsilon > 0$, $\{x \in X : |f(x) - \alpha| < \varepsilon \} \in \mathcal{U}$. In this case we write
\[\mathcal{U}\mathchar`-\lim_x f(x) = \alpha \]
and say that the number $\alpha$ is the $\mathcal{U}$-limit of $f$.
 
 We consider $\tilde{\mathcal{C}}$ as a topological space endowed with the relative topology of the weak* topology of $(l^{\infty})^*$. From this point of view, it is convenient to use the notion of the Stone-\v{C}ech compactification $\beta\mathbb{N}$ of $\mathbb{N}$. As is well known, the space of all ultrafilters on $\mathbb{N}$ can be identified with $\beta\mathbb{N}$. In particular, a free ultrafilter corresponds to a point in $\beta\mathbb{N} \setminus \mathbb{N}$, here which is denoted by $\mathbb{N}^*$. The algebra of clopen subsets of $\mathbb{N}^*$ form a topological basis for $\mathbb{N}^*$ and which are precisely the sets $A^* = \overline{A} \cap \mathbb{N}^*$ for each $A \in \mathcal{P}(\mathbb{N})$, where $\overline{A}$ denotes the closure of a set $A \in \mathcal{P}(\mathbb{N})$ in $\beta\mathbb{N}$.
 Recall that $l^{\infty}$ is isometric to $C(\beta\mathbb{N})$, the space of all real-valued continuous functions on $\beta\mathbb{N}$. For each $f \in l^{\infty}$, the isomorphic image $\overline{f}$ in $C(\beta\mathbb{N})$ is given by its continuous extension to $\beta\mathbb{N}$: namely,
\[\overline{f}(\mathcal{U}) = \mathcal{U}\mathchar`-\lim_n f(n) \]
for every ultrafilter $\mathcal{U}$ on $\mathbb{N}$, i.e., for every point $\mathcal{U}$ in $\beta\mathbb{N}$.

 Another notion pertaining to $\mathbb{N}^*$ which is important for our study is an extension of right translation on $\mathbb{N}$. We define a mapping $\tau_0 : \mathbb{N} \rightarrow \mathbb{N}$ by $\tau_0(n) = n+1$. Regarding it as a mapping from $\mathbb{N}$ to $\beta\mathbb{N}$, we can extend it to a continuous mapping on $\beta\mathbb{N}$. We denote this extension by $\tau$. The restriction of $\tau$ to $\mathbb{N}^*$ is a homeomorphism of $\mathbb{N}^*$ onto itself and we denote it by the same symbol $\tau$ as well. Then $(\mathbb{N}^*, \tau)$ is a topological dynamics.

Further, we consider the continuous flow $(\Omega^*, \{\tau^s\}_{s \in \mathbb{R}})$ of the suspension of the discrete flow $(\mathbb{N}^*, \tau)$, whose construction is well known in topological dynamics (for example see [15, Chapter 2]) and is given as follows. Let us consider a product space $\beta\mathbb{N} \times [0,1]$ and construct the compact space $\Omega$ by identifying all the pairs of points $(\eta, 1)$ and $(\tau \eta, 0)$ for all $\eta \in \beta\mathbb{N}$. Also we denote by $\Omega^*$ the closed subspace of $\Omega$ consisting of all elements $(\eta, t)$ in $\Omega$ with $\eta  \in \mathbb{N}^*$. Then we define a continuous flow on $\Omega^*$ extending $(\mathbb{N}^*, \tau)$ as follows; for each $s \in \mathbb{R}$, we define the homeomorphism $\tau^s : \Omega^* \rightarrow \Omega^*$ by
\[\tau^s(\eta, t) = (\tau^{[t+s]}\eta, t+s-[t+s]), \]
where $[x]$ denotes the largest integer not exceeding $x$ for a real number $x$. We shall use this flow in Section 5.

\section{The topological structure of the space $\tilde{\mathcal{C}}$}
 In this section we will investigate details of the compact Hausdorff space $\tilde{\mathcal{C}}$ defined in the former section. The main purpose of this section is to prove the following result, which was suggested by arguments in the proof of [5, Lemma 5]. In what follows, we denote a general element of $\beta\mathbb{N}$ by $\eta$ and those of $\Omega$ by $\omega$.
 
\begin{thm}
Each element of $\tilde{\mathcal{C}}$ can be expressed uniquely in the form
\[ \varphi_{\omega} (f) = \eta\mathchar`-\lim_n \frac{1}{\theta \cdot 2^n} \sum_{i=1}^{[\theta \cdot 2^n]} f(i) \]
for some $\omega = (\eta, t)$ in $\Omega^*$, where $\theta = 2^t$. Also this correspondence of $\Omega^*$ to $\tilde{\mathcal{C}}$ is continuous, that is, $\tilde{\mathcal{C}}$ is homeomorphic to $\Omega^*$.
\end{thm}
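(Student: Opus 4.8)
The plan is to work throughout with the logarithmic average $A_f(u) := \frac{1}{2^u}\sum_{i=1}^{[2^u]} f(i)$ for $u \ge 0$, since $\frac{1}{\theta\cdot 2^n}\sum_{i=1}^{[\theta 2^n]} f(i) = A_f(n+t) + o(1)$ as $n\to\infty$ whenever $\theta = 2^t$. The first step is an elementary but crucial estimate: for any $f\in l^{\infty}$ and $u'\ge u$,
\[ |A_f(u') - A_f(u)| \le 2\left(1 - 2^{-(u'-u)}\right)\|f\|_{\infty}, \]
so $u\mapsto A_f(u)$ is uniformly continuous on $[0,\infty)$ (this is the source of the link, announced above, with uniformly continuous functions on $[1,\infty)$). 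Set $G_f(\eta,t) := \eta\text{-}\lim_n A_f(n+t)$ for $(\eta,t)\in\beta\mathbb{N}\times[0,1]$. Because $\tau\eta\text{-}\lim_n b(n) = \eta\text{-}\lim_n b(n+1)$ for every bounded $b$, one has $G_f(\eta,1) = \eta\text{-}\lim_n A_f(n+1) = \tau\eta\text{-}\lim_n A_f(n) = G_f(\tau\eta,0)$, so $G_f$ respects the identification defining $\Omega$ and descends to a function $\bar G_f$ on $\Omega$ with $\varphi_{\omega}(f) = \bar G_f(\omega)$ for $\omega\in\Omega^*$.

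Next I would check that $h:\omega\mapsto\varphi_{\omega}$ maps $\Omega^*$ into $\tilde{\mathcal{C}}$ and is continuous. For fixed $\omega=(\eta,t)$ the sequence $m_n := [\theta 2^n]$ increases to $\infty$, so the image ultrafilter $\mathcal{V}$ of $\eta$ under $n\mapsto m_n$ is free, and $\varphi_{\omega} = \varphi^{\mathcal{V}}\in\tilde{\mathcal{C}}$. Continuity of $h$ amounts to continuity of each $\bar G_f$: continuity in the $\eta$ variable is just continuity of the $\eta$-limit of a fixed bounded sequence (its continuous extension to $\beta\mathbb{N}$), while the displayed estimate gives $|A_f(n+t) - A_f(n+t')|\le 2(1-2^{-|t-t'|})\|f\|_{\infty}$ uniformly in $n$, hence continuity in $t$ uniformly in $\eta$; the two combine into joint continuity on $\beta\mathbb{N}\times[0,1]$, which descends to $\Omega^*$.

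For surjectivity, the conceptual heart, take any free ultrafilter $\mathcal{U}$ and write $\log_2 m = n(m) + t(m)$ with $n(m) = [\log_2 m]\in\mathbb{N}$ and $t(m)\in[0,1)$. The assignment $m\mapsto(n(m),t(m))$, read in $\Omega$, extends to a continuous map $\bar\pi:\beta\mathbb{N}\to\Omega$, and $\bar\pi(\mathcal{U})$ is the class of $(\eta,t_0)$ where $\eta$ is the image of $\mathcal{U}$ under $m\mapsto n(m)$ and $t_0 = \mathcal{U}\text{-}\lim_m t(m)$; since $n(m)\to\infty$ along $\mathcal{U}$, we get $\eta\in\mathbb{N}^*$ and $\omega:=\bar\pi(\mathcal{U})\in\Omega^*$. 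As $2^{\log_2 m}=m$ is an integer, $\varphi^{\mathcal{U}}(f) = \mathcal{U}\text{-}\lim_m A_f(n(m)+t(m)) = \mathcal{U}\text{-}\lim_m \bar G_f\big(\text{class of }(n(m),t(m))\big)$, so continuity of $\bar G_f$ together with $\bar\pi(\mathcal{U}) = \mathcal{U}\text{-}\lim_m\bar\pi(m)$ yields $\varphi^{\mathcal{U}}(f) = \bar G_f(\omega) = \varphi_{\omega}(f)$. Hence $h(\omega)=\varphi^{\mathcal{U}}$ and $h$ is onto $\tilde{\mathcal{C}}$.

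Uniqueness of the representation is equivalent to injectivity of $h$, i.e. to the family $\{\bar G_f : f\in l^{\infty}\}$ separating the points of $\Omega^*$, and this is the step I expect to be the main obstacle. The $t$-coordinate is read off cleanly: taking $f(i) = g(\log_2 i)$ with $g$ a nonconstant $1$-periodic continuous function, $A_f(u)$ converges to the convolution of $g$ with the kernel $r\mapsto (\ln 2)\,2^{r}\mathbf{1}_{r\le 0}$, a $1$-periodic limit whose $m$-th Fourier coefficient equals that of $g$ times a nonzero factor; thus $\varphi_{\omega}(f)$ depends only on $t$ and, as $g$ varies, recovers $t\in\mathbb{R}/\mathbb{Z}$, matching the circle direction of the suspension. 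To separate two points with the same $t$ but $\eta\neq\eta'$, I would use block-constant test functions $f = b_k$ on $[2^k,2^{k+1})$; a direct computation gives $A_f(n+t) = (M_t b)(n)$, where $M_t$ is the causal convolution with symbol $\hat M_t(\theta) = \big(2(1-a)+(2a-1)e^{-i\theta}\big)/(2-e^{-i\theta})$, $a = 2^{-t}$. This symbol is nonvanishing on the unit circle for every $t\in[0,1)$ except the single value with $2^{-t}=3/4$, so $M_t$ separates the points of $\mathbb{N}^*$ through $\eta$-limits, and the lone exceptional phase is handled by refining the block structure. Controlling this geometric memory of the Ces\`{a}ro average is exactly the delicate point. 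Granting the separation, $h$ is a continuous bijection from the compact space $\Omega^*$ onto the Hausdorff space $\tilde{\mathcal{C}}\subseteq(l^{\infty})^*$ (weak$^*$ topology), hence a homeomorphism, completing the proof.
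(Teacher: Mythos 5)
Your overall architecture matches the paper's (the paper routes it through the integral model $\mathcal{M}$ and the affine homeomorphism $V$, but that is cosmetic): make the averages uniformly continuous in the $\log_2$ scale, deduce continuity of $\omega\mapsto\varphi_\omega$, get surjectivity by pushing an arbitrary free ultrafilter through the change of variables, separate points for injectivity, and finish by compactness. However, your ``elementary but crucial estimate'' is false as stated: $A_f(u)=2^{-u}\sum_{i\le [2^u]}f(i)$ is not uniformly continuous on $[0,\infty)$ --- it is not even continuous, having a jump of size $|f(k)|/k$ at each $u=\log_2 k$ (take $f=I_{\{2\}}$: $A_f$ jumps by $1/2$ at $u=1$, while your bound tends to $0$). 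The inequality holds only up to an error $O(2^{-u})$. This is harmless wherever you take limits along free ultrafilters only (well-definedness of $G_f$, continuity of $h$ on $\Omega^*$), but it genuinely breaks your surjectivity paragraph as written: there you need $\bar G_f\circ\bar\pi$ to be continuous on $\beta\mathbb{N}$, i.e.\ you evaluate $\bar G_f$ at the \emph{finite} points $\bar\pi(m)$ of $\Omega$ and invoke its continuity there, precisely where it has jumps. The repair is exactly the paper's device: replace sums by integrals, i.e.\ work with $\tilde A_f(u)=2^{-u}\int_1^{2^u}f([x])\,dx$ (the paper's $WU$ applied to $\tilde f(x)=f([x])$), which is genuinely uniformly continuous, satisfies your estimate exactly, and differs from $A_f$ by $O(2^{-u})$; run the surjectivity argument with $\tilde A_f$ and convert back at the end.

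Your injectivity argument is genuinely different from the paper's and is the interesting part, but it is also incomplete. The paper separates two distinct points $\omega,\omega'$ with explicit indicator functions: it takes $X=\bigcup_{n\in A}(2^{t+n-1},2^{t+n}]$ with $A\in\eta$ suitably sparse, splits into the cases $\omega'\in o(\omega)$ and $\omega'\notin o(\omega)$, and gets $\psi_\omega(I_X)\ge 1/2$ while $\psi_{\omega'}(I_X)\le 1/4$ (or a quantitative contradiction along the orbit) --- elementary and self-contained, no harmonic analysis. You instead recover $t$ from log-periodic test functions (your kernel computation is correct, and $\hat K(m)=\ln 2/(\ln 2+2\pi i m)\neq 0$), and separate $\eta\neq\eta'$ at equal $t$ via the Toeplitz symbol $\hat M_t$; your symbol and the identification of the unique exceptional value $2^{-t}=3/4$ (zero at $\theta=\pi$) are also correct. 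What is missing is a proof of the two claims doing the real work: (i) that nonvanishing of $\hat M_t$ on the circle yields separation --- this needs Wiener's lemma to produce a summable two-sided inverse kernel $N$, then $b=N*I_A$ gives $(M_tb)(n)=I_A(n)+o(1)$ after controlling the causal truncation; and (ii) that the exceptional phase is handled by refining blocks --- this is true (with half-scale blocks $[2^{k/2},2^{(k+1)/2})$ the symbol becomes $((1-\gamma)+(\gamma-\beta)e^{-i\theta})/(1-\beta e^{-i\theta})$ with $\beta=2^{-1/2}$ and $\gamma=3/4$, and the vanishing condition $\gamma=(1+\beta)/2$ fails), but as written it is a placeholder, not an argument. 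So: a sound and attractive strategy whose payoff is a systematic, quantitative identification of the obstruction, versus the paper's shorter ad hoc construction; to count as a proof, yours needs the sum-to-integral repair above and the two separation steps written out.
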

 
This result plays an important role in proving our theorems in Section 5 and is interesting in its own right. It is helpful to introduce the notion of the image of an ultrafilter to understand the above limit. Let $X$ and $Y$ be arbitrary sets and given a mapping $f : X \rightarrow Y$. For any ultrafilter $\mathcal{U}$ on X, one can define the ultrafilter on Y, denoted by $f(\mathcal{U})$ consisting of those $A \subseteq Y$ for which $f^{-1}(A) \in \mathcal{U}$. Then it is easy to see that
\[f(\mathcal{U})\mathchar`-\lim_y g(y) = \mathcal{U}\mathchar`-\lim_x g \circ f(x), \]
where g is any bounded function on $Y$.

 Let us $\mathbb{R}_+ = [0, \infty)$ and $\mathbb{R}_+^{\times} = [1, \infty)$.  We particularly consider the following three maps; $\mathbb{R}_+ \ni x \mapsto 2^x \in \mathbb{R}_+^{\times}$, $\mathbb{R}_+^{\times} \ni x \mapsto [x] \in \mathbb{N}$, $\mathbb{R}_+^{\times} \ni x \mapsto \theta x \in \mathbb{R}_+^{\times}$, where $\theta \ge 1$. We denote the images of ultrafilter $\mathcal{U}$ of the induced mappings defined above by $2^{\mathcal{U}}, [\mathcal{U}], \theta \mathcal{U}$, respectively. Notice that $2^{\mathcal{U}}$ is a ultrafilter on $\mathbb{R}_+^{\times}$ which does not contain any bounded set of $\mathbb{R}_+^{\times}$ if and only if $\mathcal{U}$ is a ultrafilter on $\mathbb{R}_+$ of the same kind, and those can be considered to be equal, then the map $\mathcal{U} \rightarrow 2^{\mathcal{U}}$ is a bijection of the set of all such ultrafilters on $\mathbb{R}_+^{\times}$ onto itself. Notice that with the notation above we can write $\varphi_{\omega} = \varphi^{[2^{\omega}]} = \varphi^{[\theta 2^{\eta}]}$. 

We will need some more preparation to prove the theorem. Let $C_{ub}(\mathbb{R}_+^{\times})$ be the space of all real-valued  uniformly continuous bounded functions on $\mathbb{R}_+^{\times}$. Its maximal ideal space, denoted here by $\mathfrak{M}$, is a compact Hausdorff space and the space $C(\mathfrak{M})$ of all real-valued continuous functions on $\mathfrak{M}$ is isometric to $C_{ub}(\mathbb{R}_+^{\times})$ as an Banach algebra. The following lemma is a consequence of [13, Lemma 2.1], but we give here a proof, for the sake of completeness:
 
\begin{lem}
$\mathfrak{M}$ is homeomorphic to $\Omega$.
\end{lem}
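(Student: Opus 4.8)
The plan is to exhibit an isometric isomorphism of Banach algebras $\Theta : C_{ub}(\mathbb{R}_+^{\times}) \to C(\Omega)$; combined with the identification $C(\mathfrak{M}) \cong C_{ub}(\mathbb{R}_+^{\times})$ recalled above, this gives $C(\mathfrak{M}) \cong C(\Omega)$ and hence $\mathfrak{M} \cong \Omega$ by Gelfand theory, since for compact Hausdorff spaces every character of $C(\cdot)$ is evaluation at a point, so an algebra isomorphism dualizes to a homeomorphism of the underlying spaces. It is convenient first to pass to the additive coordinate through the map $\mathbb{R}_+ \ni y \mapsto 2^y \in \mathbb{R}_+^{\times}$ introduced above: putting $h(y) = g(2^y)$ identifies $C_{ub}(\mathbb{R}_+^{\times})$ isometrically with $C_{ub}(\mathbb{R}_+)$, under which the generator $x \mapsto 2x$ of the flow becomes the unit translation $y \mapsto y+1$, whose symbolic counterpart on $\mathbb{N}$ is exactly $\tau_0$.

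For $g \in C_{ub}(\mathbb{R}_+^{\times})$ and $(\eta,t) \in \beta\mathbb{N} \times [0,1]$ I set $\widetilde{g}(\eta,t) = \eta\text{-}\lim_n h(n+t)$, the limit taken over $n \ge 0$ (throughout we use the harmless identification of $\beta(\mathbb{N}\cup\{0\})$ with $\beta\mathbb{N}$, under which $\tau_0$ and $\tau$ are unchanged); equivalently $\widetilde g(\eta,t) = \overline{h_t}(\eta)$, where $h_t \in l^{\infty}$ is the sequence $n \mapsto h(n+t)$ and $\overline{h_t} \in C(\beta\mathbb{N})$ its continuous extension. For fixed $t$ this is continuous in $\eta$, and since $\|\overline{h_t} - \overline{h_{t'}}\|_{\infty} = \sup_n |h(n+t) - h(n+t')|$ tends to $0$ as $t' \to t$ by the uniform continuity of $h$, the map $t \mapsto \overline{h_t}$ is norm continuous; joint continuity of $\widetilde g$ on the compact space $\beta\mathbb{N} \times [0,1]$ follows. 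The gluing identity holds because $\widetilde g(\tau\eta,0) = (\tau\eta)\text{-}\lim_n h(n) = \eta\text{-}\lim_n h(\tau_0 n) = \eta\text{-}\lim_n h(n+1) = \widetilde g(\eta,1)$, using the image-ultrafilter rule $f(\mathcal{U})\text{-}\lim g = \mathcal{U}\text{-}\lim (g \circ f)$ with $f = \tau_0$. Hence $\widetilde g$ factors through the quotient $\Omega$ to a continuous function $\Theta(g) := F_g \in C(\Omega)$. That $\Theta$ is a unital algebra homomorphism is immediate from the algebraic properties of $\mathcal{U}$-limits, and it is isometric since $\|F_g\|_{\infty} = \sup_{t} \sup_{n \ge 0} |h(n+t)| = \sup_{y \ge 0} |h(y)| = \|g\|$.

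The principal remaining point, and the main obstacle, is surjectivity. As an isometry, $\Theta$ has closed range, so by the Stone--Weierstrass theorem it is enough to verify that $\Theta(C_{ub}(\mathbb{R}_+^{\times}))$ contains the constants, which is clear, and separates the points of $\Omega$. Taking representatives $(\eta,t)$ with $t \in [0,1)$, two distinct points satisfy either $t \ne t'$, or $t = t'$ with $\eta \ne \eta'$. If $t \ne t'$, a $1$-periodic Lipschitz function $h$ gives $F_g(\eta,t) = h(t)$ independently of $\eta$, and $h$ can be chosen with $h(t) \ne h(t')$. If $t = t'$ and $\eta \ne \eta'$, I pick a bounded sequence $(a_n)$ with $\eta\text{-}\lim a \ne \eta'\text{-}\lim a$ and let $h$ be its piecewise-linear interpolant through the nodes $y = n+t$; this $h$ is Lipschitz, hence uniformly continuous, and yields $F_g(\eta,t) \ne F_g(\eta',t)$. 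Thus the range is a closed unital point-separating subalgebra, so equals $C(\Omega)$, and $\Theta$ is the desired isometric isomorphism. The only delicate bookkeeping lies in this separation step — in particular in checking that the interpolants remain uniformly continuous across the integer seams carrying the identification $(\eta,1) \sim (\tau\eta,0)$ — whereas the continuity, gluing and isometry of $\Theta$ are routine once the $\mathcal{U}$-limit formalism of the previous section is in hand.
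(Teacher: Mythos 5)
Your strategy --- extend $g$ to $\beta\mathbb{N}\times[0,1]$ by ultrafilter limits, verify the gluing relation, observe that $\Theta$ is an isometric unital homomorphism, and deduce surjectivity from closed range plus Stone--Weierstrass --- is in itself reasonable, but the proof fails at its very first step, and the failure is not bookkeeping. The claim that $h(y)=g(2^y)$ ``identifies $C_{ub}(\mathbb{R}_+^{\times})$ isometrically with $C_{ub}(\mathbb{R}_+)$'' is false: composition with the exponential does not preserve uniform continuity. Take $g(x)=\sin x$, which is bounded and Lipschitz on $[1,\infty)$, hence lies in $C_{ub}(\mathbb{R}_+^{\times})$; then $h(y)=\sin(2^y)$ is not uniformly continuous on $\mathbb{R}_+$ (take $2^{y_k}=2\pi m_k$ and $2^{y_k'}=2\pi m_k+\pi/2$, so that $y_k'-y_k\to 0$ while $|h(y_k)-h(y_k')|=1$). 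Consequently the quantity $\sup_n|h(n+t)-h(n+t')|$, which your continuity argument requires to tend to $0$ as $t'\to t$ ``by the uniform continuity of $h$'', does not do so, and $\widetilde g$ is not continuous on $\beta\mathbb{N}\times[0,1]$ for such $g$. One can see the obstruction intrinsically: on the dense ray $\{(n,t):n\in\mathbb{N}\}\subseteq\Omega$ your $\Theta(g)$ restricts to $y\mapsto g(2^y)$, and every element of $C(\Omega)$ restricts to a uniformly continuous function on that ray (this is exactly the surjectivity half of the paper's own proof, via Arzel\`a--Ascoli); so $\Theta(g)\in C(\Omega)$ would force $g(2^{\cdot})$ to be uniformly continuous, which fails here. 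Thus $\Theta$ is not a well-defined map from $C_{ub}(\mathbb{R}_+^{\times})$ into $C(\Omega)$, there is no closed subalgebra to feed to Stone--Weierstrass, and the Gelfand duality step has nothing to dualize. The distinction you have blurred is one the paper treats with care: the operator $W$, $(Wf)(x)=f(2^x)$, does not map $C_{ub}(\mathbb{R}_+^{\times})$ into uniformly continuous functions, and a separate lemma (asserting $WUf\in C_{ub}$ for $f\in L^{\infty}(\mathbb{R}_+^{\times})$) is needed precisely because this holds only for the averaged functions $Uf$.

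The repair is to drop the exponential altogether: for this lemma, $\Omega$ compactifies $\mathbb{R}_+^{\times}$ \emph{additively}, via $(n,t)\mapsto n+t$, and you should define $\widetilde g(\eta,t)=\eta\mathchar`-\lim_n g(n+t)$. Then the norm continuity of $t\mapsto\overline{g(\,\cdot\,+t)}$ follows from the uniform continuity of $g$ itself, the gluing identity $\widetilde g(\eta,1)=\widetilde g(\tau\eta,0)$ holds as before, the isometry is unchanged, and your separating functions (a $1$-periodic Lipschitz function for $t\neq t'$, and a Lipschitz interpolant through the nodes $n+t$ of a bounded sequence distinguishing $\eta$ from $\eta'$) are already uniformly continuous in the additive variable. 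With that correction your argument becomes a correct proof, and a genuine variant of the paper's: the paper obtains surjectivity directly, by showing via compactness and Arzel\`a--Ascoli that every $g\in C(\Omega)$ restricts to a uniformly continuous function on $\mathbb{R}_+^{\times}$, whereas you would obtain it from isometry (closed range) plus Stone--Weierstrass separation. The exponential change of variable belongs to the later parts of the paper (where the suspension flow is matched with the dilation flow $P_s$), not to the identification $C_{ub}(\mathbb{R}_+^{\times})\cong C(\Omega)$.
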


\begin{proof}
 It is sufficient to show the algebraic isomorphism $C_{ub}(\mathbb{R}_+^{\times}) \cong C(\Omega)$. If we regard the points $(n, t)$ in $\Omega$ with $n \in \mathbb{N}$ as the points $n+t$ in $\mathbb{R}_+^{\times}$ we can consider that $\Omega$ contains $\mathbb{R}_+^{\times}$ as a dense subspace, so that $\Omega$ is a compactification of $\mathbb{R}_+^{\times}$. Now given any $f \in C_{ub}(\mathbb{R}_+^{\times})$, put $f_n(s) = f(n+s), \ s \in [0,1], n=1,2, \cdots$. Then we have a sequence $\{f_n\}_{n=1}^{\infty}$ of $C([0,1])$. Since $f$ is bounded and uniformly continuous on $\mathbb{R}_+^{\times}$, it follows that this sequence is uniformly bounded and equicontinuous. Hence by Arzel\`{a}-Ascoli's theorem, $\{f_n\}_{n=1}^{\infty}$ is relatively compact in $C([0,1])$ in its uniform topology. Therefore when we put
\[\Phi_f : \mathbb{N} \longrightarrow C([0,1]), \ \Phi_f(n) = f_n, n=1,2, \cdots, \]
then we can extend it continuously to $\beta\mathbb{N}$. Then we define a continuous function $\overline{f}$ on $\Omega$ by 
\[\overline{f}(\omega) = (\Phi_f(\eta))(t), \quad (\eta \in \beta\mathbb{N}, t \in [0,1]). \]
We denote this mapping $f \mapsto \overline{f}$ by $\Phi : C_{ub}(\mathbb{R}_+^{\times}) \rightarrow C(\Omega)$. Notice that $f = \overline{f}$ on $\mathbb{R}_+^{\times}$, so that $\overline{f}$ is a continuous extension of $f$ to $\Omega$. In particular, it is obvious that $\Phi$ is injective. We shall show that $\Phi$ is a algebraic isomorphism. It is trivial that $\Phi$ is a algebraic homomorphism. To show that $\Phi$ is surjective, it is sufficient to show that for every continuous function $g$ on $\Omega$ its restriction to $\mathbb{R}_+^{\times}$ is uniformly continuous on $\mathbb{R}_+^{\times}$. Now we regard $g$ as a mapping from $\beta\mathbb{N}$ to $C([0,1])$ with uniform topology:
\[\Phi_g : \beta\mathbb{N} \longrightarrow C([0,1]), \quad \Phi_g(\omega) = g(\omega, t), \]
then $\Phi_g$ is continuous. Since $\Phi_g(\beta\mathbb{N})$ is a compact subset of $C([0,1])$,
$\Phi_g(\mathbb{N})$ is relatively compact in $C([0,1])$. Hence $\{\Phi_g(n)\}_{n=1}^{\infty} = \{g(n+t)\}_{n=1}^{\infty}$ is equicontinuous. Thus $g$ is uniformly continuous on $\mathbb{R}_+^{\times}$.
\end{proof}

Thus we can identify $\mathfrak{M}$ with $\Omega$, so that in the sequel we will use only the symbol $\Omega$. 
Notice that $\Omega$ is the compactification of $\mathbb{R}_+^{\times}$ to which any uniformly continuous bounded function $f(x)$ on $\mathbb{R}_+^{\times}$ can be extended continuously. In particular, we can see from the above proof that, for any $f \in C_{ub}(\mathbb{R}_+^{\times})$ and $\omega = (\eta, t) \in \Omega$, its continuous extension $\overline{f}(\omega)$ is given by the formula
\[\overline{f}(\omega) = \omega\mathchar`-\lim_s f(s), \]
where $\omega$ is regarded as an ultrafilter on $\mathbb{R}_+^{\times}$ generated by the basis $\{A + t : A \in \eta \}$. From now on, we often identify a point $\omega = (\eta, t) \in \Omega$ with the above ultrafilter. An immediate consequence of these facts which will be used in the next section is that for any cluster point $\alpha$ of the set $\{f(x)\}_{x \in \mathbb{R}_+^{\times}}$, there exists a point $\omega \in \Omega$ such that $\overline{f}(\omega) = \alpha$. Since we are mainly interested in the extended values of $f(x) \in C_{ub}(\mathbb{R}_+^{\times})$, that is, cluster points of
$\{f(x)\}_{x \ge 1}$ as $x \to \infty$, we may often ignore the difference in values on bounded sets of $\mathbb{R}_+^{\times}$ among members in $C_{ub}(\mathbb{R}_+^{\times})$; namely, we consider a member of $C_{ub}(\mathbb{R}_+^{\times})$ modulo $C_0(\mathbb{R}_+^{\times})$, where $C_0(\mathbb{R}_+^{\times})$ is the ideal of $C_{ub}(\mathbb{R}_+^{\times})$ consisting of all those members $f(x)$ which converges to zero as $x$ tends to $\infty$. Then it holds that
\[ C(\Omega^*) = C_{ub}(\mathbb{R}_+^{\times}) / C_0(\mathbb{R}_+^{\times}), \]
where $C(\Omega^*)$ is the space of all real-valued continuous functions on $\Omega^*$.

In what follows, we shall show an affine homeomorphism between $\mathcal{C}$ and $\mathcal{M}$ and then introduce a version of Theorem 3.1 which is formulated in the integral setting. For each $f \in \l^{\infty}$, we define a function $\tilde{f} \in L^{\infty}(\mathbb{R}_+^{\times})$ by  $\tilde{f}(x) = f([x])$. Then we define an affine continuous mapping $V$ as follows:
\[V : \mathcal{M} \longrightarrow \mathcal{C}, \quad (V\psi)(f) = \psi(\tilde{f}). \]

\begin{thm}
$V$ is a affine homeomorphism between $\mathcal{C}$ and $\mathcal{M}$.
\end{thm}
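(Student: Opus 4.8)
The goal is to show that the map $V\colon\mathcal{M}\to\mathcal{C}$, defined by $(V\psi)(f)=\psi(\tilde f)$ with $\tilde f(x)=f([x])$, is an affine homeomorphism. The plan is to proceed in four steps: (i) check that $V$ is well-defined, i.e. that $V\psi$ genuinely lands in $\mathcal{C}$; (ii) exhibit a candidate inverse $W\colon\mathcal{C}\to\mathcal{M}$ and verify $V\circ W=\mathrm{id}$ and $W\circ V=\mathrm{id}$; (iii) note affinity, which is immediate; and (iv) upgrade the set-bijection to a homeomorphism using compactness. Since $\mathcal{C}$ and $\mathcal{M}$ are weak* compact and Hausdorff, once $V$ is a continuous bijection the inverse is automatically continuous, so step (iv) reduces to continuity of $V$, which was already asserted when $V$ was introduced.

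First I would verify well-definedness. The map $f\mapsto\tilde f$ is a positive linear isometry of $l^\infty$ into $L^\infty(\mathbb{R}_+^\times)$, so $V\psi=\psi\circ(\,\widetilde{\cdot}\,)$ is automatically a normalized positive linear functional on $l^\infty$ whenever $\psi$ is one on $L^\infty(\mathbb{R}_+^\times)$. The substantive point is the defining inequality $\varphi(f)\le\overline C(f)$. This follows from the single comparison
\[
\overline{M}(\tilde f)=\limsup_{x\to\infty}\frac1x\int_1^x \tilde f(t)\,dt=\limsup_{x\to\infty}\frac1x\int_1^x f([t])\,dt=\limsup_{n\to\infty}\frac1n\sum_{i=1}^n f(i)=\overline C(f),
\]
the middle equality holding because $\int_1^x f([t])\,dt$ and $\sum_{i\le x}f(i)$ differ by a bounded amount and the normalizing factor $1/x$ kills the discrepancy in the $\limsup$. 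Granting this identity, $\psi\in\mathcal{M}$ gives $(V\psi)(f)=\psi(\tilde f)\le\overline M(\tilde f)=\overline C(f)$, so $V\psi\in\mathcal{C}$.

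For the inverse, the natural idea is to push a functional on $l^\infty$ up to one on $L^\infty(\mathbb{R}_+^\times)$ by averaging each function over the intervals $[n,n+1)$: given $g\in L^\infty(\mathbb{R}_+^\times)$ set $(Rg)(n)=\int_n^{n+1}g(t)\,dt$, a bounded sequence, and define $(W\varphi)(g)=\varphi(Rg)$. One checks that $R$ is positive and linear, that $R\tilde f=f$ so that $W\circ V=\mathrm{id}_{\mathcal M}$, and conversely that $V\circ W=\mathrm{id}_{\mathcal C}$; the compatibility $\overline C(Rg)\le\overline M(g)$ needed to see $W\varphi\in\mathcal{M}$ is again an interchange-of-averaging estimate analogous to the displayed one. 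Affinity of $V$ (and of $W$) is clear since both are given by precomposition with a fixed linear map.

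The main obstacle I anticipate is the bookkeeping in step (ii): one must make sure the averaging operator $R$ really inverts $f\mapsto\tilde f$ on the nose, that it maps $\mathcal M$-admissible functions to $\mathcal C$-admissible ones (the inequality $\overline C(Rg)\le\overline M(g)$), and that no information is lost—i.e. that $W$ is genuinely the two-sided inverse rather than merely a one-sided section. Once the bijection is established, the homeomorphism claim is the easy part: $V$ is weak*-continuous because for each fixed $f$ the evaluation $\psi\mapsto\psi(\tilde f)$ is weak*-continuous on $\mathcal M$, and a continuous bijection from a compact space to a Hausdorff space is a homeomorphism, so $V^{-1}=W$ is automatically continuous.
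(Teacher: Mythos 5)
Your plan is correct, and it differs from the paper's proof in one substantive way: where you construct an explicit two-sided inverse $W\varphi = \varphi\circ R$ with $(Rg)(n)=\int_n^{n+1}g(t)\,dt$, the paper proves surjectivity abstractly, by restricting $\varphi\in\mathcal{C}$ to the subspace $\tilde{l}^{\infty}=\{\tilde f: f\in l^{\infty}\}$ and extending it to all of $L^{\infty}(\mathbb{R}_+^{\times})$ via Hahn--Banach under the dominating functional $\overline{M}$; the paper's injectivity step, on the other hand, is exactly your averaging operator (it takes $g(n)=\int_n^{n+1}f(t)\,dt$ and shows $\psi(f)=\psi(\tilde g)$ for all $\psi\in\mathcal{M}$). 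So the technical core is shared, but your route buys something real: it avoids Hahn--Banach entirely, and since $W$ is itself given by precomposition with a fixed positive linear map, continuity of $V^{-1}=W$ is immediate without even invoking the compact-to-Hausdorff argument. One correction and one caution. First, you have the compositions swapped: $R\tilde f=f$ gives $(V(W\varphi))(f)=\varphi(R\tilde f)=\varphi(f)$, i.e.\ $V\circ W=\mathrm{id}_{\mathcal{C}}$; it does \emph{not} give $W\circ V=\mathrm{id}_{\mathcal{M}}$, because $\widetilde{Rg}\neq g$ as elements of $L^{\infty}(\mathbb{R}_+^{\times})$. Second, the identity $W\circ V=\mathrm{id}_{\mathcal{M}}$ is precisely the point where the defining inequality of $\mathcal{M}$ must be used, not just positivity: one checks that the primitive $\int_1^x\bigl(g-\widetilde{Rg}\bigr)(t)\,dt$ vanishes at integers and is bounded by $2\|g\|_{\infty}$ everywhere, so $\overline{M}\bigl(\pm(g-\widetilde{Rg})\bigr)=0$, and then $\psi(g-\widetilde{Rg})\le 0$ and $\psi(\widetilde{Rg}-g)\le 0$ force $\psi(g)=\psi(\widetilde{Rg})$ for every $\psi\in\mathcal{M}$. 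You flagged this as the anticipated obstacle and your proposed mechanism (bounded discrepancy of primitives killed by the normalization) is the right one, so the gap is fillable exactly as you describe; just be aware that this lemma, not $R\tilde f=f$, is what carries the injectivity content in both your argument and the paper's.
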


\begin{proof}
First we show that $V$ is surjective. It is noted that for each $f \in l^{\infty}$
\[\frac{1}{n} \sum_{i=1}^n f(i) = \frac{1}{n} \int_1^{n+1} \tilde{f}(t)dt. \] 
Let us $\tilde{l}^{\infty} = \{\tilde{f}(x) \in L^{\infty}(\mathbb{R}_+) : f \in l^{\infty}\}$. Given any $\varphi \in \mathcal{C}$, we define a functional $\psi_0$ on $\tilde{l}^{\infty}$ by $\psi_0(\tilde{f}) = \varphi(f)$ for every $f \in l^{\infty}$. Since
\[ \psi_0(\tilde{f}) = \varphi(f) \le \limsup_n \frac{1}{n} \sum_{i=1}^n f(i) = \limsup_x \frac{1}{x}
\int_1^x \tilde{f}(t)dt \]
holds from above, we can extend $\psi_0$ to $\psi \in \mathcal{M}$ by the Hahn-Banach theorem. Then we have obviously that $V(\psi) = \varphi$, which shows that $V$ is surjective. Next we show that $V$ is injective. It is sufficient to show that for any $f \in L^{\infty}(\mathbb{R}_+^{\times})$, there exists a function $g \in l^{\infty}$ such that $\psi(f) = \psi(\tilde{g})$ for every $\psi \in \mathcal{M}$. In fact, suppose that this holds and let $\psi, \psi_1$ be two distinct elements of $\tilde{\mathcal{M}}$ with $V\psi = V\psi_1$. Then there is some $f \in L^{\infty}(\mathbb{R}_+^{\times})$ such that $\psi(f) \not= \psi_1(f)$. On the other hand, there exists some $g \in l^{\infty}$ such that $\psi(f) = \psi(\tilde{g}) = (V\psi)(g), \psi_1(f) = \psi_1(\tilde{g}) = (V\psi_1)(g)$, i.e., $\psi(f) = \psi_1(f)$, which is a contradiction. We can get such a function $g(n)$ simply by putting $g(n) = \int_n^{n+1} f(t)dt, n=1,2, \cdots$. Therefore we have shown that $V$ is an affine homeomorphism.
\end{proof}

For any $\omega \in \Omega^*$ we define $\psi_{\omega} =\psi^{2^{\omega}}$, i.e.,
\[\psi_{\omega}(f) = 2^{ \omega}\mathchar`-\lim_x \frac{1}{x} \int_1^x f(t)dt = \omega\mathchar`-\lim_x \frac{1}{2^x} \int_1^{2^x} f(t)dt. \]
We denote this mapping of $\Omega^*$ to $\tilde{\mathcal{M}}, \ \omega \mapsto \psi_{\omega}$ by $\Psi$. The following lemma is obvious.
\begin{lem}
$V$ maps $\tilde{\mathcal{M}}$ onto $\tilde{\mathcal{C}}$ and $V\psi_{\omega} = \varphi_{\omega}$ holds for every $\omega \in \Omega^*$.
\end{lem}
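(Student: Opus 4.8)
The plan is to reduce the whole statement to the single identity
\[ V\psi^{\mathcal{U}} = \varphi^{[\mathcal{U}]} \]
valid for every ultrafilter $\mathcal{U}$ on $\mathbb{R}_+^{\times}$ containing no bounded set, where $[\mathcal{U}]$ denotes its image under $x \mapsto [x]$. Once this is in hand the second assertion of the lemma drops out at once, since $\psi_{\omega} = \psi^{2^{\omega}}$ gives $V\psi_{\omega} = \varphi^{[2^{\omega}]}$, and $\varphi^{[2^{\omega}]} = \varphi_{\omega}$ is exactly the identity recorded just after Theorem 3.1.

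To establish the identity I would first unwind the definitions: for $f \in l^{\infty}$ one has $(V\psi^{\mathcal{U}})(f) = \psi^{\mathcal{U}}(\tilde{f}) = \mathcal{U}\text{-}\lim_x \frac{1}{x}\int_1^x \tilde{f}(t)\,dt$, where $\tilde{f}(t) = f([t])$ is constant on each interval $[i, i+1)$. Evaluating the integral explicitly, I would compare the integral average $\frac{1}{x}\int_1^x \tilde{f}(t)\,dt$ with the Ces\`{a}ro average $\frac{1}{[x]}\sum_{i=1}^{[x]} f(i)$; the two differ only through the fractional contribution of the last interval and through the replacement of the normalization $1/x$ by $1/[x]$, and a routine bound shows the difference is at most a fixed multiple of $\|f\|_{\infty}/x$, hence tends to $0$ as $x \to \infty$.

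The key step is then purely formal. Since $\mathcal{U}$ contains no bounded set of $\mathbb{R}_+^{\times}$, any function tending to $0$ at infinity has $\mathcal{U}$-limit $0$, so by linearity of the $\mathcal{U}$-limit the $o(1)$ discrepancy above may be discarded. Setting $g(n) = \frac{1}{n}\sum_{i=1}^n f(i)$ we have $\frac{1}{[x]}\sum_{i=1}^{[x]} f(i) = g([x])$, and the image-ultrafilter formula $f(\mathcal{U})\text{-}\lim_y h(y) = \mathcal{U}\text{-}\lim_x h(f(x))$ applied to $x \mapsto [x]$ gives $\mathcal{U}\text{-}\lim_x g([x]) = [\mathcal{U}]\text{-}\lim_n g(n) = \varphi^{[\mathcal{U}]}(f)$, which is the claimed identity.

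For the first assertion I would observe that, by the identity just proved, $V$ sends $\tilde{\mathcal{M}}$ onto $\{\varphi^{[\mathcal{U}]}\}$, so it remains only to check that $[\mathcal{U}]$ runs through exactly the free ultrafilters on $\mathbb{N}$. If $\mathcal{U}$ has no bounded set then each set $\{x \ge N\}$ lies in $\mathcal{U}$, so each $\{n \ge N\}$ lies in $[\mathcal{U}]$ and $[\mathcal{U}]$ is free, giving $V(\tilde{\mathcal{M}}) \subseteq \tilde{\mathcal{C}}$; conversely any free ultrafilter $\mathcal{V}$ on $\mathbb{N}$ is the image of its pushforward along $\mathbb{N} \hookrightarrow \mathbb{R}_+^{\times}$, which has no bounded set, so $\varphi^{\mathcal{V}} \in V(\tilde{\mathcal{M}})$ and the two sets coincide. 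The only point that calls for genuine care is the estimate bounding the gap between the integral and Ces\`{a}ro averages; all the remaining manipulations of ultrafilter limits are routine, which is why the lemma can reasonably be called obvious.
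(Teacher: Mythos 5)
The paper offers no proof of this lemma at all---it is simply declared ``obvious''---so your write-up is best read as supplying exactly the details that implicit claim rests on, and it does so correctly. Your reduction to the identity $V\psi^{\mathcal{U}} = \varphi^{[\mathcal{U}]}$, the $O(\|f\|_{\infty}/x)$ comparison of the integral average with the Ces\`{a}ro average, and the appeal to the image-ultrafilter formula are precisely the ingredients the paper already has in hand (the average identity is used in the proof of Theorem 3.2, the image formula and the notation $\varphi_{\omega} = \varphi^{[2^{\omega}]}$ are recorded just after Theorem 3.1), so this is the intended argument, carried out in full.
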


From this lemma, Theorem 3.1 is equivalent to the assertion that $\Psi$ is a homeomorphism, which we will prove sequentially.
For the sake of simplicity, we will use a linear operator $U : L^{\infty}(\mathbb{R}_+^{\times}) \longrightarrow L^{\infty}(\mathbb{R}_+^{\times})$ defined as $Uf(x) = \frac{1}{x}\int_1^x f(t)dt$, and can write that $\psi^{\mathcal{U}} (f) = \mathcal{U}\mathchar`-\lim_x (Uf)(x)$. Also let us define the linear operator $W$ as follows:
\[W : L^{\infty}(\mathbb{R}_+^{\times}) \longrightarrow L^{\infty}(\mathbb{R}_+), \quad (Wf)(x) = f(2^x). \]
First, we will need the following elementary lemma.

\begin{lem}
If $f \in L^{\infty}(\mathbb{R}_+^{\times})$, then $WUf \in C_{ub}(\mathbb{R}_+^{\times})$.
\end{lem}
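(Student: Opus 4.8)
The goal is to show that for every $f \in L^{\infty}(\mathbb{R}_+^{\times})$, the function $(WUf)(x) = (Uf)(2^x) = \frac{1}{2^x}\int_1^{2^x} f(t)\,dt$ is uniformly continuous and bounded on $\mathbb{R}_+$. Boundedness is immediate: since $\|f\|_{\infty} < \infty$, the averages $(Uf)(y) = \frac{1}{y}\int_1^y f(t)\,dt$ satisfy $|(Uf)(y)| \le \|f\|_{\infty}$ for all $y \ge 1$, so $\|WUf\|_{\infty} \le \|f\|_{\infty}$. The substance of the lemma is uniform continuity, and my plan is to estimate the modulus of continuity of $WUf$ directly.

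The plan is to fix $x, x' \in \mathbb{R}_+$ with $x \le x'$, write $y = 2^x$ and $y' = 2^{x'}$ (so $1 \le y \le y'$), and bound the difference
\[
(Uf)(y') - (Uf)(y) = \frac{1}{y'}\int_1^{y'} f(t)\,dt - \frac{1}{y}\int_1^{y} f(t)\,dt.
\]
The standard manipulation is to split the first integral as $\int_1^{y} + \int_{y}^{y'}$ and combine, yielding
\[
(Uf)(y') - (Uf)(y) = \left(\frac{1}{y'} - \frac{1}{y}\right)\int_1^{y} f(t)\,dt + \frac{1}{y'}\int_{y}^{y'} f(t)\,dt.
\]
Each term is controlled by $\|f\|_{\infty}$: the first has absolute value at most $\|f\|_{\infty}\,(y'-y)/y' = \|f\|_{\infty}\,(1 - y/y')$, and the second has absolute value at most $\|f\|_{\infty}\,(y'-y)/y' = \|f\|_{\infty}\,(1 - y/y')$. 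Hence
\[
|(Uf)(y') - (Uf)(y)| \le 2\|f\|_{\infty}\left(1 - \frac{y}{y'}\right).
\]
The key point is that the bound depends on $y$ and $y'$ only through the ratio $y/y' = 2^{x-x'} = 2^{-(x'-x)}$, which is exactly what the exponential reparametrization $W$ buys us.

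Substituting back, I obtain
\[
|(WUf)(x') - (WUf)(x)| \le 2\|f\|_{\infty}\left(1 - 2^{-(x'-x)}\right),
\]
and the right-hand side depends only on the difference $x'-x$ and tends to $0$ as $x'-x \to 0^{+}$, uniformly in $x$. Given $\varepsilon > 0$, choosing $\delta > 0$ so that $2\|f\|_{\infty}(1 - 2^{-\delta}) < \varepsilon$ gives $|(WUf)(x') - (WUf)(x)| < \varepsilon$ whenever $|x - x'| < \delta$, which is uniform continuity on $\mathbb{R}_+$. I do not anticipate a genuine obstacle here; the only thing to be careful about is the bookkeeping in the two-term split and confirming that the ratio-dependence really makes the estimate uniform in $x$ rather than merely pointwise. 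This uniformity is the whole reason the lemma is true with $C_{ub}$ rather than just continuity, and it is precisely the effect of conjugating the Cesàro averaging operator $U$ by the exponential map, which stretches the multiplicatively-scaled behavior of $Uf$ into additively-uniform behavior.
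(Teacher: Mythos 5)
Your proof is correct and is essentially identical to the paper's: both split the difference $(Uf)(y') - (Uf)(y)$ into the same two terms (a term proportional to $(Uf)(y)$ and an average of $f$ over $[y,y']$), obtain the same bound $2\|f\|_{\infty}\bigl(1 - 2^{-(x'-x)}\bigr)$ after the exponential substitution, and conclude uniform continuity from the fact that this bound depends only on $x'-x$. The only cosmetic difference is that you record the (trivial) boundedness estimate explicitly, which the paper leaves implicit.
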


\begin{proof}
Let $f$ be in $L^{\infty}(\mathbb{R}_+^{\times})$ and $h$ be a positive real number, then we have
\[(Uf)(x+h) - (Uf)(x) = -\frac{h}{x+h}(Uf)(x) + \frac{1}{x+h}\int_x^{x+h} f(t)dt. \]
Hence we get that
\[|(Uf)(x+h) - (Uf)(x)| \le \frac{2h\|f\|_{\infty}}{x+h}. \]
Let $s, \theta \in \mathbb{R}_+$ and put $x = 2^s, \ h = 2^{s+\theta} - 2^s$. Applying above results, we have
\[|(Uf)(2^{s+\theta}) - (Uf)(2^s)| \le \frac{2\cdot2^s(2^{\theta}-1)\|f\|_{\infty}}{2^{s+\theta}} = 2\|f\|_{\infty}(1-\frac{1}{2^{\theta}}). \]
The right hand side of the equation tends to 0 monotonically as $\theta \rightarrow 0$, and that does not depend on $s$. Then $(WUf)(s)$ is uniformly continuous on $\mathbb{R}_+^{\times}$.
\end{proof}

Notice that by the above result it can be written as $\psi_{\omega}(f) = \omega\mathchar`-\lim_x (WUf)(x) = \overline{(WUf)}(\omega)$. 

\begin{lem}
$\Psi$ is continuous.
\end{lem}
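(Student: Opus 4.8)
The plan is to reduce the asserted continuity of $\Psi$ to continuity of the individual coordinate (evaluation) maps and then feed in the machinery already assembled. Since $\tilde{\mathcal{M}}$ carries the relative weak* topology inherited from $(L^{\infty}(\mathbb{R}_+^{\times}))^*$, a map into $\tilde{\mathcal{M}}$ is continuous if and only if its composition with each evaluation functional $\psi \mapsto \psi(f)$, for $f \in L^{\infty}(\mathbb{R}_+^{\times})$, is continuous. Hence it suffices to fix an arbitrary $f \in L^{\infty}(\mathbb{R}_+^{\times})$ and show that the scalar-valued map
\[ \Omega^* \ni \omega \longmapsto \psi_{\omega}(f) \]
is continuous on $\Omega^*$.

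The key observation is the identity recorded just before the statement, namely $\psi_{\omega}(f) = \omega\text{-}\lim_x (WUf)(x) = \overline{(WUf)}(\omega)$. This says that the evaluation map above is simply the restriction to $\Omega^*$ of the function $\overline{(WUf)}$ already defined on all of $\Omega$. Thus the problem is entirely transferred from the dual space to the compactification $\Omega$.

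From here I would invoke the two preceding lemmas. By Lemma 3.3 the function $WUf$ belongs to $C_{ub}(\mathbb{R}_+^{\times})$, and by Lemma 3.1 the space $\Omega$ is exactly the maximal ideal space $\mathfrak{M}$ of $C_{ub}(\mathbb{R}_+^{\times})$, to which every element of $C_{ub}(\mathbb{R}_+^{\times})$ extends continuously. Consequently $\overline{(WUf)}$ is a genuine continuous function on $\Omega$, and its restriction to the closed subspace $\Omega^*$ is continuous there. This yields continuity of $\omega \mapsto \psi_{\omega}(f)$ for each fixed $f$, which by the reduction in the first paragraph gives the continuity of $\Psi$.

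Because the substantive work has been front-loaded into Lemmas 3.1 and 3.3 and into the identity $\psi_{\omega}(f) = \overline{(WUf)}(\omega)$, no genuinely difficult step remains. The only point that requires care is the reduction itself, i.e.\ the standard fact that continuity of a map into a subset of a dual space equipped with the weak* topology is equivalent to continuity of all the scalar evaluations; once this is in place there is no estimate or limiting argument left to carry out.
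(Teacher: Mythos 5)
Your proposal is correct and is essentially the paper's own argument: the paper verifies continuity by taking a net $\omega_\alpha \to \omega$ in $\Omega^*$ and checking that $\psi_{\omega_\alpha}(f) \to \psi_\omega(f)$ for each fixed $f$, using exactly the identity $\psi_\omega(f) = \overline{(WUf)}(\omega)$ together with Lemma 3.3 to see that $\overline{WUf} \in C(\Omega^*)$. Your phrasing via the initial-topology characterization of weak* continuity is just the net argument stated abstractly, so the two proofs coincide in substance.
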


\begin{proof}
Let $\{\omega_{\alpha}\}_{\alpha \in \Lambda}$ be a net in $\Omega^*$ which converges to $\omega$. We will show that
\[\lim_{\alpha} \psi_{\omega_{\alpha}}(f) = \psi_{\omega}(f) \]
for every $f \in L^{\infty}(\mathbb{R}_+^{\times})$. From the assumption, we have that for any
$g \in C(\Omega^*)$ 
\[\lim_{\alpha} g(\omega_{\alpha}) = g(\omega). \]
Notice that $\overline{WUf}$ is in $C(\Omega^*)$ and then we have
\[\lim_{\alpha} \overline{WUf}(\omega_{\alpha}) = \overline{WUf}(\omega), \]
which implies that
\[\lim_{\alpha} \psi_{\omega_{\alpha}}(f) = \psi_{\omega}(f). \]
The proof is complete.
\end{proof}

\begin{lem}
$\Psi$ is surjective.
\end{lem}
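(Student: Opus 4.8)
We need to show that $\Psi : \Omega^* \to \tilde{\mathcal{M}}$, defined by $\omega \mapsto \psi_\omega = \psi^{2^\omega}$, is surjective. Since $\tilde{\mathcal{M}}$ consists of all functionals $\psi^{\mathcal{U}}$ where $\mathcal{U}$ is an ultrafilter on $\mathbb{R}_+^\times$ containing no bounded set, I need to show that any such $\psi^{\mathcal{U}}$ equals $\psi_\omega$ for some $\omega \in \Omega^*$.

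**The key reduction.** The crucial observation from the preliminary material is that $\psi_\omega(f) = \omega\text{-}\lim_x (WUf)(x) = \overline{WUf}(\omega)$, and by Lemma 3.3, $WUf \in C_{ub}(\mathbb{R}_+^\times)$, hence $\overline{WUf} \in C(\Omega^*)$. So $\psi_\omega$ depends on $f$ only through the value of the continuous function $\overline{WUf}$ at the point $\omega \in \Omega^*$.

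**Strategy.** Take an arbitrary $\psi^{\mathcal{U}} \in \tilde{\mathcal{M}}$ where $\mathcal{U}$ is an ultrafilter on $\mathbb{R}_+^\times$ with no bounded set. We have $\psi^{\mathcal{U}}(f) = \mathcal{U}\text{-}\lim_x (Uf)(x)$. The plan is to transfer $\mathcal{U}$ via the map $x \mapsto \log_2 x$ (i.e. use $W$, whose underlying point map is $\mathbb{R}_+ \ni x \mapsto 2^x$) so that the integral-average $Uf$ becomes $WUf$, which is uniformly continuous. Concretely, set $\mathcal{V} = \log_2(\mathcal{U})$, the image ultrafilter on $\mathbb{R}_+$ under $x \mapsto \log_2 x$; this contains no bounded set. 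Then for every $f$,
$$\psi^{\mathcal{U}}(f) = \mathcal{U}\text{-}\lim_x (Uf)(x) = \mathcal{V}\text{-}\lim_x (WUf)(x),$$
using the image-ultrafilter identity stated after Theorem 3.1. Now since $WUf \in C_{ub}(\mathbb{R}_+^\times)$, this last $\mathcal{V}$-limit equals the value of the continuous extension $\overline{WUf}$ at the point $\omega \in \Omega$ corresponding to $\mathcal{V}$.

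**The main obstacle and its resolution.** The crux is producing the right $\omega \in \Omega^*$. An arbitrary ultrafilter $\mathcal{V}$ on $\mathbb{R}_+$ need not literally be one of the special ultrafilters of the form $(\eta,t)$ with basis $\{A+t : A \in \eta\}$; however, $\Omega$ is precisely the maximal ideal space of $C_{ub}(\mathbb{R}_+^\times)$ (Lemma 3.1), so $\mathcal{V}$, as a character on $C_{ub}(\mathbb{R}_+^\times)$ via $g \mapsto \mathcal{V}\text{-}\lim g$, corresponds to a unique point $\omega \in \Omega$. Because $\mathcal{V}$ contains no bounded set, this character kills $C_0(\mathbb{R}_+^\times)$, so $\omega$ lies in $\Omega^*$ rather than in the copy of $\mathbb{R}_+^\times$; this is where I must verify that no bounded set in $\mathcal{U}$ forces $\omega$ into the "finite part." For this $\omega$ we then have, for every $f \in L^\infty(\mathbb{R}_+^\times)$,
$$\psi^{\mathcal{U}}(f) = \mathcal{V}\text{-}\lim_x (WUf)(x) = \overline{WUf}(\omega) = \psi_\omega(f),$$
so $\psi^{\mathcal{U}} = \psi_\omega = \Psi(\omega)$, proving surjectivity. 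The genuinely delicate point is the passage from the abstract character/point $\omega$ back to a limit evaluation along $\mathcal{V}$; this is exactly guaranteed by the identification $\overline{g}(\omega) = \omega\text{-}\lim_s g(s)$ established after Lemma 3.1, so the argument closes cleanly once that identification is invoked.
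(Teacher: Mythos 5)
Your proof is correct and follows essentially the same route as the paper: pull $\mathcal{U}$ back along $x \mapsto 2^x$ to an ultrafilter on $\mathbb{R}_+$ (your $\mathcal{V}$ is the paper's $\mathcal{U}_0$), rewrite $\psi^{\mathcal{U}}(f)$ as a $\mathcal{V}$-limit of $WUf \in C_{ub}(\mathbb{R}_+^{\times})$, and replace $\mathcal{V}$ by the point $\omega \in \Omega^*$ it determines via the identification of $\Omega$ with the maximal ideal space of $C_{ub}(\mathbb{R}_+^{\times})$. In fact you make explicit the step the paper leaves implicit, namely why ``$\mathcal{U}_0$ can be replaced by some $\omega \in \Omega^*$'' (the character kills $C_0(\mathbb{R}_+^{\times})$, hence lies in $\Omega^*$), which is a welcome clarification.
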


\begin{proof}
We take any $\psi^{\mathcal{U}} \in \tilde{\mathcal{M}}$. Then we shall show that there exists a point $\omega = (\eta, t) \in \Omega^*$ such that $\psi_{\omega} = \psi^{\mathcal{U}}$. As we have mentioned before, since the mapping $\mathcal{U} \mapsto 2^{\mathcal{U}}$ is a bijection of the set of ultrafilters on $\mathbb{R}_+^{\times}$ not containing any bounded set of $\mathbb{R}_+^{\times}$ onto itself, we can get the inverse $\mathcal{U}_0$ of $\mathcal{U}$, that is, $\mathcal{U} = 2^{\mathcal{U}_0}$. Then it follows that
\[\psi^{\mathcal{U}}(f) = \mathcal{U}\mathchar`-\lim_x (Uf)(x) = 2^{\mathcal{U}_0}\mathchar`-\lim_x (Uf)(x) = \mathcal{U}_0\mathchar`-\lim_x (WUf)(x) \]
for every $f \in L^{\infty}(\mathbb{R}_+^{\times})$. Since $(WUf)(x) \in C_{ub}(\mathbb{R}_+^{\times})$, $\mathcal{U}_0$ can be replaced by some $\omega = (\eta, t) \in \Omega^*$. Therefore we have that
\[\psi^{\mathcal{U}}(f) = \omega\mathchar`-\lim_x (WUf)(x) = 2^{\omega}\mathchar`-\lim_x (Uf)(x) = \psi_{\omega}(f). \] 
The proof is complete.
\end{proof}

\begin{lem}
$\Psi$ is injective.
\end{lem}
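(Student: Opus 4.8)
The plan is to reduce the injectivity of $\Psi$ to a point-separating property of the functions $WUf$, and then to establish that property by \emph{inverting} the operator $WU$ on a dense set of target functions. Since we have already observed that $\psi_{\omega}(f)=\overline{WUf}(\omega)$, two points $\omega_{1},\omega_{2}\in\Omega^{*}$ give the same functional, $\psi_{\omega_{1}}=\psi_{\omega_{2}}$, exactly when $\overline{WUf}(\omega_{1})=\overline{WUf}(\omega_{2})$ for every $f\in L^{\infty}(\mathbb{R}_{+}^{\times})$. Hence $\Psi$ is injective if and only if the family $S=\{WUf:f\in L^{\infty}(\mathbb{R}_{+}^{\times})\}$, regarded inside $C(\Omega^{*})=C_{ub}(\mathbb{R}_{+}^{\times})/C_{0}(\mathbb{R}_{+}^{\times})$, separates the points of $\Omega^{*}$. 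Because $\Omega^{*}$ is compact Hausdorff, $C(\Omega^{*})$ already separates its points, and a uniformly dense linear subspace separates exactly the same pairs of points; so it suffices to prove that the image of $S$ is dense in $C(\Omega^{*})$. The main obstacle is precisely that $WU$ (averaging followed by logarithmic rescaling) is a rather rigid operator, so the crux is to show its range is nonetheless large modulo $C_{0}$.

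To unlock this, I would read the defining formula $(WUf)(s)=\tfrac{1}{2^{s}}\int_{1}^{2^{s}}f(u)\,du$ as a first-order relation determining $f$ from a prescribed output. Concretely, let $g$ be any bounded $C^{1}$ function on $[0,\infty)$ with bounded derivative, and define $f\in L^{\infty}(\mathbb{R}_{+}^{\times})$ by $f(2^{s})=g(s)+g'(s)/\ln 2$ for $s\ge 0$; this $f$ is continuous and bounded. Using the identity $\frac{d}{ds}\bigl(2^{s}g(s)\bigr)=2^{s}\ln 2\,\bigl(g(s)+g'(s)/\ln 2\bigr)$ together with the substitution $u=2^{r}$, one computes $\int_{1}^{2^{s}}f(u)\,du=2^{s}g(s)-g(0)$, so that
\[ (WUf)(s)=\frac{1}{2^{s}}\int_{1}^{2^{s}}f(u)\,du=g(s)-g(0)\,2^{-s}. \]
The correction term $g(0)\,2^{-s}$ tends to $0$ as $s\to\infty$ and therefore lies in $C_{0}(\mathbb{R}_{+}^{\times})$; hence $g$ and $WUf$ determine the same element of $C(\Omega^{*})$. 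Thus every bounded $C^{1}$ function with bounded derivative belongs to the image of $S$ in $C(\Omega^{*})$.

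It then remains to note that such functions are uniformly dense in $C_{ub}(\mathbb{R}_{+}^{\times})$. Given $\phi\in C_{ub}(\mathbb{R}_{+}^{\times})$ (after extending it to a bounded uniformly continuous function on $\mathbb{R}$, which does not affect behaviour at infinity), mollification $\phi_{\varepsilon}=\phi*\rho_{\varepsilon}$ by a smooth kernel yields $\|\phi_{\varepsilon}\|_{\infty}\le\|\phi\|_{\infty}$, a bounded derivative $\phi_{\varepsilon}'$, and $\phi_{\varepsilon}\to\phi$ uniformly by the uniform continuity of $\phi$. Consequently the image of $S$ is dense in $C(\Omega^{*})$, so $S$ separates the points of $\Omega^{*}$, and by the reduction of the first paragraph $\Psi$ is injective. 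Combined with Lemmas 3.5 and 3.6, $\Psi$ is then a continuous bijection from the compact space $\Omega^{*}$ onto the Hausdorff space $\tilde{\mathcal{M}}$, hence a homeomorphism; via Lemma 3.3 and Theorem 3.2 this finally yields Theorem 3.1.
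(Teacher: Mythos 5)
Your proof is correct, but it takes a genuinely different route from the paper's. The paper proves injectivity by direct construction: given distinct $\omega, \omega^{\prime} \in \Omega^*$, it splits into two cases according to whether $\omega^{\prime}$ lies on the flow orbit $o(\omega)$ or not, and in each case exhibits an explicit set $X$ (a union of intervals $(2^{t+n-1}, 2^{t+n}]$ indexed by a sufficiently sparse set $A \in \eta$) with $\psi_{\omega}(I_X) \not= \psi_{\omega^{\prime}}(I_X)$; in particular the separating functions there are characteristic functions, so distinct points of $\Omega^*$ are already distinguished as measures. You instead reduce injectivity to the statement that $\widehat{(WUL^{\infty})}$ separates points of $\Omega^*$, and establish the stronger fact that it is uniformly dense in $C(\Omega^*)$ by inverting $WU$ modulo $C_0(\mathbb{R}_+^{\times})$: for $g$ bounded and $C^1$ with bounded derivative, setting $f(2^s) = g(s) + g^{\prime}(s)/\ln 2$ gives $(WUf)(s) = g(s) - g(0)2^{-s}$, since the integrand after the substitution $u = 2^r$ is exactly $\frac{d}{dr}\bigl(2^r g(r)\bigr)$; mollification then makes such $g$ dense in $C_{ub}(\mathbb{R}_+^{\times})$, and a uniformly dense subspace of $C(K)$, $K$ compact Hausdorff, separates the same pairs of points as $C(K)$ itself. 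All three steps (the reduction via $\psi_{\omega}(f) = \overline{(WUf)}(\omega)$, the inversion computation, and the mollification) are sound. It is worth noting what your route buys: the density of $\widehat{(WUL^{\infty})}$ in $C(\Omega^*)$ is precisely the paper's Lemma 4.3, which the paper obtains only later via the Stone--Weierstrass theorem, using Lemma 3.6 itself to supply the separation hypothesis and Lemmas 4.1--4.2 for the algebra structure of $\mathfrak{U}$; your argument proves that density directly and constructively, so it would subsume Lemmas 4.1--4.3 and streamline Sections 3--4. What the paper's approach buys in exchange: it needs no smoothing machinery, it separates points by indicators --- the natural statement when the $\psi_{\omega}$ are later viewed as density measures --- and its orbit dichotomy makes visible the role of the flow $(\Omega^*, \{\tau^s\}_{s \in \mathbb{R}})$ that is exploited again in Section 5.
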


\begin{proof}
It is sufficient to show that for any pair $\omega, \omega^{\prime}$ of distinct elements of $\Omega^*$, there exists a set $X \in \mathcal{B}(\mathbb{R}_+^{\times})$ such that $\psi_{\omega}(I_X) \not= \psi_{\omega^{\prime}}(I_X)$, where $\mathcal{B}(\mathbb{R}_+^{\times})$ denotes the set of Borel subsets of $\mathbb{R}_+^{\times}$ and $I_X$ denotes the characteristic function of $X$. We divide the proof into two cases according to whether one is contained in the orbit of the other or not. Let us denote $o(\omega) = \{\tau^s \omega : s \in \mathbb{R} \}$, the orbit of $\omega$ under $\{\tau^s\}_{s \in \mathbb{R}}$. \\
\underline{case 1}. $\omega^{\prime} \in o(\omega)$. \\
Without loss of generality, we can assume that $\omega^{\prime} = \tau^s\omega (s > 0)$. Let $\omega = (\eta, t)$. We take a
set $A \in \eta$ such that $|n-m| \ge [s] +2$ whenever $n,m \in A, n \not= m$. Then we 
define a set $X \in \mathcal{B}(\mathbb{R}_+^{\times})$ as $X = \cup_{n \in A} (2^{t+n-1}, 2^{t+n}]$. We will show that $\psi_{\omega}(I_X) \not= \psi_{\omega^{\prime}}(I_X)$.
Now assume oppositely that $\psi_{\omega}(I_X) = \psi_{\omega^{\prime}}(I_X) = 
\alpha$. Let $\varepsilon$ be a positive number with $\varepsilon < \frac{1-2^{-s}}{1+2^{-s}} \alpha$. Then there exists a set $B \in \eta$ such that $B \subseteq A$ and 
\[\Big|\frac{1}{2^{t+x}} \int_1^{2^{t+x}} I_X(y) dy - \alpha \Big| < \varepsilon \quad and \quad 
\Big|\frac{1}{2^{s+t+x}} \int_1^{2^{s+t+x}} I_X(y) dy - \alpha \Big| < \varepsilon \]
whenever $x \in B$. Observing that by the assumption of $A$, $X \cap (2^{t+n}, 2^{s+t+n}] = \emptyset$ for any $n \in A$. We have then that if $x \in B$,
\begin{align}
\int_1^{2^{t+x}} I_X(y)dy < 2^{t+x}(\alpha + \varepsilon) &\Longrightarrow \int_1^{2^{s+t+x}} 
I_X(y)dy < 2^{t+x}(\alpha + \varepsilon) \notag \\
&\Longleftrightarrow \frac{1}{2^{s+t+x}} \int_1^{2^{s+t+x}} I_X(y)dy \le 2^{-s}(\alpha + \varepsilon) < \alpha - \varepsilon, \notag
\end{align}
which is a contradiction. \\
\underline{case2}. $\omega^{\prime} \notin o(\omega)$. \\
Let us $\omega = (\eta, t)$ and $\omega^{\prime} = (\eta^{\prime}, t^{\prime})$. We take $A \in \eta$ such that $\tau^{-1} A \cup A \cup \tau A  \cup \tau^2 A \notin \eta^{\prime}$. We set $X = \cup_{n \in A} (2^{t+n-1}, 2^{t+n}]$, then it is obvious that 
\begin{align}
\psi_{\omega}(I_X) &= \omega\mathchar`-\lim_x \frac{1}{2^{t+x}} \int_1^{2^{t+x}} I_X(y)dy \notag \\
&\ge \liminf_{x \in A} \frac{1}{2^{t+x}} \int_1^{2^{t+x}} I_X(y)dy \notag \\
&\ge \frac{2^{t+x} - 2^{t+x-1}}{2^{t+x}} = \frac{1}{2}. \notag
\end{align}
Hence in order to show that $\psi_{\omega}(I_X) \not= \psi_{\omega^{\prime}}(I_X)$, 
it is sufficient to show that $\psi_{\omega^{\prime}}(I_X) < \frac{1}{2}$. Now we choose $B \in \eta^{\prime}$ such that $(\tau^{-1} A \cup A \cup \tau A \cup \tau^2 A) \cap B = \emptyset$. Then for any $x^{\prime} \in B$ we have $(2^{t^{\prime} + x^{\prime} -2}, 2^{t^{\prime} + x^{\prime}}] \cap X = \emptyset$. Then we have that
\[\psi_{\omega^{\prime}}(I_X) \le \limsup_{x^{\prime} \in B} \frac{1}{2^{t^{\prime} + x^{\prime}}} \int_1^{2^{t^{\prime} + x^{\prime}}} I_X(y)dy \le \frac{2^{t^{\prime} + x^{\prime} -2}}{2^{t^{\prime} + x^{\prime}}} = \frac{1}{4}, \]
which proves the theorem.
\end{proof}
Therefore, since $\Psi$ is a continuous bijective mapping from $\Omega^*$ to $\tilde{\mathcal{M}}$, it is a homeomorphism. We have completed the proof of Theorem 3.1.

Now we explain that the mapping $\Psi : \Omega^* \rightarrow \tilde{\mathcal{M}}$ carry over the structure of continuous flow on $\Omega^*$ defined in Section 2 into the continuous flow on $\tilde{\mathcal{M}}$ defined in Section 1; for any $r = 2^s$ with $s \ge 0$ we have that

\begin{align}
(P_s^* \psi_{\omega})(f) = \psi_{\omega}(P_s f) &= 2^{\omega}\mathchar`-\lim_x \frac{1}{x} \int_1^x f(rt)dt \notag \\
&= 2^{\omega}\mathchar`-\lim_x \frac{1}{rx} \int_r^{rx} f(t)dt  \notag \\
&= r \cdot 2^{\omega}\mathchar`-\lim_x \frac{1}{x} \int_1^x f(t)dt \notag \\
&= 2^{\tau^s \omega}\mathchar`-\lim_x \frac{1}{x} \int_1^x f(t)dt. \notag \\
&= \psi_{\tau^s \omega}(f). \notag
\end{align}
Therefore, we have obtained the following result, which asserts that the two continuous flows $(\Omega^*, \{\tau^s\}_{s \in \mathbb{R}})$ and $(\tilde{\mathcal{M}}, \{P_s^*\}_{s \in \mathbb{R}})$ are isomorphic via $\Psi$.
\begin{thm}
$\Psi \circ \tau^s = P_s^* \circ \Psi$ holds for each $s \in \mathbb{R}$.
\end{thm}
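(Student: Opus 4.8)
The plan is to prove the operator identity pointwise: for each fixed $s$ and each $\omega \in \Omega^*$ the two sides send $\omega$ to the elements $\Psi(\tau^s\omega)=\psi_{\tau^s\omega}$ and $P_s^*(\Psi\omega)=P_s^*\psi_\omega$ of $\tilde{\mathcal{M}}$, so the theorem reduces to the equality of these two positive functionals, which in turn is the statement that they agree on every $f\in L^\infty(\mathbb{R}_+^\times)$. So I would fix $f$ and first treat $s\ge 0$, writing $r=2^s\ge 1$, and unwind the definitions: by definition of the adjoint, $(P_s^*\psi_\omega)(f)=\psi_\omega(P_sf)$, and by the definition of $\psi_\omega=\psi^{2^\omega}$ together with $(P_sf)(t)=f(rt)$, this is the limit along the ultrafilter $2^\omega$ of $\frac{1}{x}\int_1^x f(rt)\,dt$.

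Next I would carry out the analytic normalization. The substitution $u=rt$ turns $\frac{1}{x}\int_1^x f(rt)\,dt$ into $\frac{1}{rx}\int_r^{rx}f(u)\,du$, and splitting $\frac{1}{rx}\int_r^{rx}=\frac{1}{rx}\int_1^{rx}-\frac{1}{rx}\int_1^r$ isolates a boundary term $\frac{1}{rx}\int_1^r f(u)\,du$, which is $O(1/x)$ because $\int_1^r f$ is a fixed finite constant. Since the ultrafilter $2^\omega$ contains no bounded subset of $\mathbb{R}_+^\times$, this negligible term does not affect the limit, so $(P_s^*\psi_\omega)(f)$ equals the limit along $2^\omega$ of $(Uf)(rx)$. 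By the image-of-an-ultrafilter formula recalled after Theorem 3.1, this is precisely the limit of $Uf$ along the image ultrafilter $r\cdot 2^\omega$, that is, $\psi^{\,r\cdot 2^\omega}(f)$.

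The heart of the matter is then the ultrafilter identity $r\cdot 2^\omega=2^{\tau^s\omega}$. Here I would use that $\omega=(\eta,t)\in\Omega^*$ is identified with the ultrafilter on $\mathbb{R}_+$ generated by $\{A+t:A\in\eta\}$, and that under this identification the suspension flow $\tau^s$ is exactly the image under the translation $x\mapsto x+s$: writing $t+s=[t+s]+(t+s-[t+s])$ shows that translating the basis of $\omega$ by $s$ produces exactly the generating basis of $\tau^s\omega=(\tau^{[t+s]}\eta,\,t+s-[t+s])$. Functoriality of the image construction, $g(h(\mathcal{U}))=(g\circ h)(\mathcal{U})$, applied to $h:x\mapsto x+s$ and $g:y\mapsto 2^y$, then identifies $2^{\tau^s\omega}$ with the image of $\omega$ under $x\mapsto 2^{x+s}$; since $2^{x+s}=r\cdot 2^x$, this image is precisely $r\cdot 2^\omega$. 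Combining the steps gives $(P_s^*\psi_\omega)(f)=\psi^{\,2^{\tau^s\omega}}(f)=\psi_{\tau^s\omega}(f)$ for all $f$, hence $P_s^*\circ\Psi=\Psi\circ\tau^s$ for $s\ge 0$.

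Finally, to reach every $s\in\mathbb{R}$ I would invoke that $\{\tau^s\}$ and $\{P_s^*\}$ are one-parameter groups of homeomorphisms and that $\Psi$ is a homeomorphism by Theorem 3.1: from $P_u^*\Psi=\Psi\tau^u$ for $u\ge 0$ one solves to obtain $\Psi\tau^{-u}=(P_u^*)^{-1}\Psi=P_{-u}^*\Psi$, so the conjugacy extends to negative parameters. I expect the main obstacle to be precisely the identity $r\cdot 2^\omega=2^{\tau^s\omega}$, namely pinning down how the abstract suspension flow acts on the associated ultrafilter on $\mathbb{R}_+$ and verifying that the exponential change of variable intertwines additive translation with multiplicative scaling; the purely analytic steps (change of variables and discarding the boundary term) are routine once this ultrafilter bookkeeping is in place.
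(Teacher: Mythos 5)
Your proposal is correct and takes essentially the same route as the paper: the paper's proof is exactly the displayed chain of equalities $(P_s^*\psi_{\omega})(f) = \psi_{\omega}(P_s f) = 2^{\omega}\text{-}\lim_x \frac{1}{rx}\int_r^{rx} f(t)\,dt = (r\cdot 2^{\omega})\text{-}\lim_x (Uf)(x) = 2^{\tau^s\omega}\text{-}\lim_x (Uf)(x) = \psi_{\tau^s\omega}(f)$ for $r = 2^s$, $s \ge 0$. You merely make explicit what the paper leaves implicit, namely the vanishing of the boundary term $\frac{1}{rx}\int_1^r f$, the verification of the key ultrafilter identity $r\cdot 2^{\omega} = 2^{\tau^s\omega}$ via the translation description of the suspension flow, and the extension from $s \ge 0$ to all $s \in \mathbb{R}$ by the group property.
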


\section{The set of extreme points of $\mathcal{C}$}
 In this section we investigate the algebraic structure of $\mathcal{C}$ as a compact convex set. For this purpose, it is equivalent to study $\mathcal{M}$ by Theorem 3.2. So here we continue to work with $\mathcal{M}$. First we begin with the following relatively elementary result.

\begin{thm}
 Let $ex(\mathcal{M})$ be the set of all extreme points of $\mathcal{M}$. Then $\tilde{\mathcal{M}}$ is weak* compact, and $ex(\mathcal{M}) \subseteq \tilde{\mathcal{M}}$ holds.
\end{thm}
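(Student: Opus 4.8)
The plan is to deduce both assertions from the results of Section~3 together with the Hahn--Banach separation theorem and Milman's partial converse to the Krein--Milman theorem. First I would dispose of the compactness of $\tilde{\mathcal{M}}$. As established in Section~3, the map $\Psi : \Omega^* \to \tilde{\mathcal{M}}$ is a continuous bijection (Lemmas~3.4--3.6), and $\Omega^*$ is a closed subspace of the compact space $\Omega$, hence compact. Since the continuous image of a compact space is compact, $\tilde{\mathcal{M}} = \Psi(\Omega^*)$ is weak* compact (indeed $\Psi$ is then a homeomorphism, so $\tilde{\mathcal{M}} \cong \Omega^*$).

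For the inclusion $ex(\mathcal{M}) \subseteq \tilde{\mathcal{M}}$, the key step is to prove the identity $\mathcal{M} = \overline{\mathrm{co}}\,(\tilde{\mathcal{M}})$, the weak* closed convex hull. One inclusion is immediate: each $\psi^{\mathcal{U}} \in \tilde{\mathcal{M}}$ satisfies $\psi^{\mathcal{U}}(f) = \mathcal{U}\text{-}\lim_x (Uf)(x) \le \overline{M}(f)$, so $\tilde{\mathcal{M}} \subseteq \mathcal{M}$, and since $\mathcal{M}$ is weak* compact and convex, $\overline{\mathrm{co}}\,(\tilde{\mathcal{M}}) \subseteq \mathcal{M}$. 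For the reverse inclusion I would argue by contradiction via separation: if some $\psi_0 \in \mathcal{M}$ were not in the weak* closed convex set $\overline{\mathrm{co}}\,(\tilde{\mathcal{M}})$, then, because the weak* continuous linear functionals on $(L^{\infty}(\mathbb{R}_+^{\times}))^*$ are exactly the evaluations at elements of the predual $L^{\infty}(\mathbb{R}_+^{\times})$, there would exist $f \in L^{\infty}(\mathbb{R}_+^{\times})$ and $c \in \mathbb{R}$ with $\psi(f) \le c < \psi_0(f)$ for every $\psi \in \tilde{\mathcal{M}}$.

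The crux is then the exhaustion identity $\sup_{\psi \in \tilde{\mathcal{M}}} \psi(f) = \overline{M}(f)$ for every $f \in L^{\infty}(\mathbb{R}_+^{\times})$. Indeed, $\psi^{\mathcal{U}}(f) = \mathcal{U}\text{-}\lim_x (Uf)(x)$ is a cluster value at infinity of the bounded function $Uf$, so the supremum over all ultrafilters containing no bounded set is at most $\limsup_{x \to \infty} (Uf)(x) = \overline{M}(f)$; conversely, choosing $x_n \to \infty$ with $(Uf)(x_n) \to \overline{M}(f)$ and taking the image under $n \mapsto x_n$ of any free ultrafilter on $\mathbb{N}$ yields a member of $\tilde{\mathcal{M}}$ whose value at $f$ equals $\overline{M}(f)$. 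Combining this with the separation above gives $\overline{M}(f) = \sup_{\psi \in \tilde{\mathcal{M}}} \psi(f) \le c < \psi_0(f) \le \overline{M}(f)$, a contradiction; hence $\mathcal{M} = \overline{\mathrm{co}}\,(\tilde{\mathcal{M}})$. Since $\tilde{\mathcal{M}}$ is already weak* closed by the first part, Milman's theorem yields $ex(\mathcal{M}) \subseteq \tilde{\mathcal{M}}$.

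The main obstacle I expect is the exhaustion identity $\sup_{\tilde{\mathcal{M}}} \psi(f) = \overline{M}(f)$ --- concretely, producing an ultrafilter without bounded sets along which $Uf$ realizes its $\limsup$ --- together with verifying carefully that the separating functional may be taken in the predual $L^{\infty}(\mathbb{R}_+^{\times})$. Once these points are secured, the passage from $\mathcal{M} = \overline{\mathrm{co}}\,(\tilde{\mathcal{M}})$ to the desired inclusion through Milman's theorem is routine.
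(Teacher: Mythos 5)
Your proposal is correct and takes essentially the same route as the paper: compactness of $\tilde{\mathcal{M}}$ as the continuous image $\Psi(\Omega^*)$ of the compact space $\Omega^*$, and the inclusion $ex(\mathcal{M}) \subseteq \tilde{\mathcal{M}}$ reduced, via Hahn--Banach separation and Milman's partial converse (which the paper compresses into its appeal to the Krein--Milman theorem), to the exhaustion identity $\sup_{\psi \in \tilde{\mathcal{M}}} \psi(f) = \overline{M}(f)$. The only cosmetic difference is in how that identity is realized: you push a free ultrafilter on $\mathbb{N}$ forward along a sequence $x_n \to \infty$ attaining the $\limsup$ of $Uf$, while the paper obtains a point $\omega \in \Omega^*$ with $\overline{(WUf)}(\omega) = \limsup_{x\to\infty}(WUf)(x)$ from the cluster-value property of the uniformly continuous function $WUf$; both are valid.
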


\begin{proof}
The compactness of $\tilde{\mathcal{M}}$ follows from Lemma 3.4. By the Krein-Milman theorem and Lemma 3.5, it is sufficient to show that
\[\sup_{\omega \in \Omega^*} \psi_{\omega}(f) = \overline{M}(f) \]
for every $f \in L^{\infty}(\mathbb{R}_+^{\times})$. It is obvious that
\[ \overline{{M}}(f) = \limsup_{x \to \infty} (Uf)(x) = \limsup_{x \to \infty} (WUf)(x) = \overline{(WUf)}(\omega) 
 = 2^{\omega}\mathchar`-\lim_x \frac{1}{x} \int_1^x f(t)dt = \psi_{\omega}(f). \]
for some $\omega \in \Omega^*$. We are done.
\end{proof}

Let us $P(\Omega^*)$ be the set of all probability Borel measures on $\Omega^*$. Given any $\mu \in P(\Omega^*)$, the integral defined as follows yields a member $\psi$ of $\mathcal{M}$;
\[ \psi(f) = \int_{\Omega^*} \psi_{\omega}(f) d\mu(\omega). \]
Since for any $\omega \in \Omega^*$ the measure $\delta_{\omega} \in P(\Omega^*)$, the probability measure equals 1 on any Borel subset of $\Omega^*$ which contains $\omega$ and equals 0 otherwise, induces $\psi_{\omega} \in \mathcal{M}$, we can regard this mapping of $P(\Omega^*)$ to $\mathcal{M}$ as a extension of $\Psi$ and we denote it by the symbol $\overline{\Psi}$. Now Theorem 4.1 together with the Krein-Milman theorem asserts that this mapping $\overline{\Psi}$ is surjective ([11, Section 1]):
\begin{Cor}
Every member $\psi$ of $\mathcal{M}$ can be expressed in the form
\[ \psi(f) = \int_{\Omega^*} \psi_{\omega}(f) d\mu(\omega). \]
for some probability measure $\mu$ on $\Omega^*$.
\end{Cor}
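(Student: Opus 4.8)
The plan is to realize $\overline{\Psi}(\mu)$ as the barycenter of a pushed-forward measure and then deduce surjectivity of $\overline{\Psi}$ from the barycentric form of the Krein--Milman theorem, exactly along the lines of the cited [11, Section 1]. Throughout I regard $\mathcal{M}$ as a weak* compact convex subset of the locally convex space $(L^{\infty}(\mathbb{R}_+^{\times})^*, w^*)$, whose continuous linear functionals are precisely the evaluations $\hat{f} : \psi \mapsto \psi(f)$ for $f \in L^{\infty}(\mathbb{R}_+^{\times})$. For a probability Borel measure $\lambda$ on the compact convex set $\mathcal{M}$, its barycenter $b(\lambda) \in \mathcal{M}$ is the unique functional with $b(\lambda)(f) = \int_{\mathcal{M}} \hat{f}\, d\lambda$ for all $f$; existence of $b(\lambda)$ and its membership in $\mathcal{M}$ follow from compactness and convexity.

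First I would record that $\omega \mapsto \psi_{\omega}(f) = \overline{WUf}(\omega)$ is continuous on $\Omega^*$ by Lemma 3.4, so the integral defining $\overline{\Psi}(\mu)$ makes sense and satisfies $|\overline{\Psi}(\mu)(f)| \le \|f\|_{\infty}$. Using the homeomorphism $\Psi : \Omega^* \to \tilde{\mathcal{M}}$ of Theorem 3.1 and the change-of-variables formula for pushforwards, a direct computation gives, for every $\mu \in P(\Omega^*)$ and every $f$, the identity $\overline{\Psi}(\mu)(f) = \int_{\Omega^*} \psi_{\omega}(f)\, d\mu(\omega) = \int_{\tilde{\mathcal{M}}} \hat{f}\, d(\Psi_*\mu) = b(\Psi_*\mu)(f)$, so that $\overline{\Psi}(\mu) = b(\Psi_*\mu)$. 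Hence the range of $\overline{\Psi}$ coincides with the image under the barycenter map $b$ of the set $P(\tilde{\mathcal{M}})$ of probability measures carried by $\tilde{\mathcal{M}}$.

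It therefore remains to show $b(P(\tilde{\mathcal{M}})) = \mathcal{M}$. By Theorem 4.1, $ex(\mathcal{M}) \subseteq \tilde{\mathcal{M}}$ and $\tilde{\mathcal{M}}$ is weak* compact, hence closed, so $\overline{ex(\mathcal{M})} \subseteq \tilde{\mathcal{M}}$. Now $P(\tilde{\mathcal{M}})$ is weak* compact and convex, and $b$ is continuous and affine, so $b(P(\tilde{\mathcal{M}}))$ is a compact convex subset of $\mathcal{M}$. Since $b(\delta_{\psi_{\omega}}) = \psi_{\omega}$, this image contains $\tilde{\mathcal{M}} \supseteq ex(\mathcal{M})$; by the Krein--Milman theorem $\mathcal{M} = \overline{\mathrm{co}}(ex(\mathcal{M}))$ is then contained in it, giving $b(P(\tilde{\mathcal{M}})) = \mathcal{M}$. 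Combined with the previous paragraph this yields the surjectivity of $\overline{\Psi}$, which is precisely the assertion.

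The step doing the real work, and the main obstacle, is the barycentric Krein--Milman argument in a setting that need not be metrizable: one must know that every point of a compact convex set is the barycenter of a probability measure carried by the closure of the extreme points. I would supply this by the clean affine-image argument of the third paragraph (continuity and affineness of $b$ together with Krein--Milman), rather than invoking the full Choquet theorem, since no uniqueness of $\mu$ is being claimed here. A minor point to check carefully is the weak* continuity of $b : P(\mathcal{M}) \to \mathcal{M}$, which reduces to the fact that each coordinate map $\lambda \mapsto \int_{\mathcal{M}} \hat{f}\, d\lambda$ is weak* continuous because $\hat{f}$ restricts to a continuous affine function on the compact set $\mathcal{M}$.
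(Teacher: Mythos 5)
Your proof is correct and takes essentially the same approach as the paper: the paper's entire argument is to invoke Theorem 4.1 together with the Krein--Milman theorem and cite [11, Section 1], and your barycenter construction is precisely the content of that citation. You have simply (and correctly) filled in the details the paper delegates to the reference---identifying $\overline{\Psi}(\mu)$ with the barycenter of the pushforward $\Psi_*\mu$, and showing $b(P(\tilde{\mathcal{M}}))$ is a compact convex set containing $ex(\mathcal{M})$, hence all of $\mathcal{M}$ by Krein--Milman.
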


 In connection with the linear operator $U$, we introduce a subspace $\mathfrak{U}$ of $C_{ub}(\mathbb{R}_+^{\times})$ as follows:
\[ \mathfrak{U} = \{f(x) \in C_{ub}(\mathbb{R}_+^{\times}) : (xf(x))^{\prime} \in L^{\infty}(\mathbb{R}_+^{\times}) \}. \]
In other words, $f(x) \in C_{ub}(\mathbb{R}_+^{\times})$ is in $\mathfrak{U}$ if and only if
derivative $(xf(x))^{\prime}$ exists almost everywhere on $\mathbb{R}_+^{\times}$ and also
it is an essentially bounded measurable function. 

\begin{lem}
$\mathfrak{U}$ is a subalgebra of $C_{ub}(\mathbb{R}_+^{\times})$.
\end{lem}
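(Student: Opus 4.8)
The plan is to establish the two closure conditions defining a subalgebra: that $\mathfrak{U}$ is a linear subspace of $C_{ub}(\mathbb{R}_+^{\times})$ and that it is closed under pointwise multiplication. Since $\mathfrak{U}$ is by definition a subset of the algebra $C_{ub}(\mathbb{R}_+^{\times})$, in each case the containment $fg \in C_{ub}(\mathbb{R}_+^{\times})$ (respectively $af+bg \in C_{ub}(\mathbb{R}_+^{\times})$) is already known, so the only feature requiring verification is the extra condition that the relevant function $(x\,\cdot\,)'$ lies in $L^{\infty}(\mathbb{R}_+^{\times})$. Linearity is immediate: for $f,g \in \mathfrak{U}$ and scalars $a,b$, the assignment $h \mapsto (xh)'$ is linear wherever the derivatives exist, so $(x(af+bg))' = a(xf)' + b(xg)'$ holds almost everywhere and is bounded and measurable, being a linear combination of two such functions.

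The crux is closure under products, and the preliminary step I would isolate first is a reformulation of the defining condition. Writing $(xf)' = f + xf'$, I note that $f$ is differentiable almost everywhere (since $xf$ is, and $x$ is a smooth nonvanishing factor), whence $xf' = (xf)' - f$ is a difference of two members of $L^{\infty}(\mathbb{R}_+^{\times})$ --- here using that $f \in C_{ub}(\mathbb{R}_+^{\times})$ is bounded --- so that $xf' \in L^{\infty}(\mathbb{R}_+^{\times})$, and likewise $xg' \in L^{\infty}(\mathbb{R}_+^{\times})$.

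Granting this, for $f,g \in \mathfrak{U}$ I would first record that $fg \in C_{ub}(\mathbb{R}_+^{\times})$ by the standard fact that a product of bounded uniformly continuous functions is again bounded and uniformly continuous, via the estimate $|f(x)g(x) - f(y)g(y)| \le \|f\|_{\infty}|g(x)-g(y)| + \|g\|_{\infty}|f(x)-f(y)|$. Then, applying the product rule to $x \cdot (fg)$, I compute
\[ (xfg)' = fg + x(fg)' = fg + (xf')g + f(xg'). \]
Each summand on the right lies in $L^{\infty}(\mathbb{R}_+^{\times})$: the term $fg$ because $f$ and $g$ are bounded, and the terms $(xf')g$ and $f(xg')$ because $xf', xg' \in L^{\infty}(\mathbb{R}_+^{\times})$ by the reformulation while $g$ and $f$ are bounded. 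Hence $(xfg)' \in L^{\infty}(\mathbb{R}_+^{\times})$ and $fg \in \mathfrak{U}$.

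I do not expect a genuine obstacle here; the only points demanding care are measure-theoretic bookkeeping rather than any substantive difficulty. In particular, I must check that the product rule invoked above is legitimate despite the derivatives existing only almost everywhere: at every point where both $xf$ and $xg$ (equivalently $f$ and $g$) are differentiable the ordinary product rule applies, and the exceptional set, being a union of two null sets, is still null, so the displayed identity for $(xfg)'$ holds almost everywhere. The one remaining minor verification is that $fg$ indeed stays in $C_{ub}(\mathbb{R}_+^{\times})$, which is precisely the elementary uniform-continuity estimate recorded above.
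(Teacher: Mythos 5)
Your proposal is correct and follows essentially the same route as the paper's own proof: you reduce the defining condition to $xf' \in L^{\infty}(\mathbb{R}_+^{\times})$ using $(xf)' = f + xf'$ and boundedness of $f$, and then apply the product rule to get $(x(fg))' = fg + (xf')g + f(xg')$ with each summand essentially bounded. The extra bookkeeping you supply (linearity, uniform continuity of the product, and the almost-everywhere validity of the product rule) is sound but is exactly what the paper treats as routine.
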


\begin{proof}
We show that it is closed under multiplication. For $f(x) \in \mathfrak{U}$, notice that $(xf(x))^{\prime} = f(x) + xf^{\prime}(x) \in L^{\infty}(\mathbb{R}_+^{\times})$ and which implies that 
$xf^{\prime}(x) \in L^{\infty}(\mathbb{R}_+^{\times})$ since $f(x)$ is bounded on $\mathbb{R}_+^{\times}$. Now let us given arbitrary pair of elements $f, g$ of $\mathfrak{U}$.
Then we get by the product rule,
\[(x(fg)(x))^{\prime} = f(x)g(x) + xf^{\prime}(x)g(x) + xg^{\prime}(x)f(x) \]
exists almost everywhere on $\mathbb{R}_+^{\times}$ and it is essentially bounded since as mentioned above, $xf^{\prime}(x)$ and $xg^{\prime}(x)$ are bounded. Hence $fg$ is in $\mathfrak{U}$.
\end{proof}
Now we take up the relation between $\mathfrak{U}$ and the range $UL^{\infty}$ of the operator $U$. A hat placed above the symbol for a subalgebra of $C_{ub}(\mathbb{R}_+^{\times})$ will be used to indicate its quotient algebra modulo the ideal $C_0(\mathbb{R}_+^{\times})$; for example, $\hat{\mathfrak{U}} = \mathfrak{U} / (\mathfrak{U} \cap C_0(\mathbb{R}_+^{\times}))$. Let us take any $f(x)$ in $\mathfrak{U}$ and put $(xf(x))^{\prime} = \xi_f(x)$. Then
\begin{align}
& xf(x) - f(1) = \int_1^x \xi_f(t)dt, \quad x \ge 1 \notag \\
&\Longleftrightarrow f(x) =\frac{1}{x} \int_1^x \xi_f(t)dt + \frac{f(1)}{x} = (U\xi_f)(x) + \frac{f(1)}{x}, \quad x \ge 1. \notag
\end{align}
Hence,
\[ f(x) \equiv (U\xi_f)(x) \pmod{C_0(\mathbb{R}_+^{\times})}. \]
Conversely, for any $f \in L^{\infty}(\mathbb{R}_+^{\times})$ it is obvious that $Uf \in \mathfrak{U}$. This leads to the following lemma.

\begin{lem}
$\hat{\mathfrak{U}} = \widehat{(UL^{\infty})}$.
\end{lem}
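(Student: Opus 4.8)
The plan is to deduce the equality from the two observations already recorded immediately before the statement, by reading each of them as an inclusion of the corresponding quotient algebras inside $C_{ub}(\mathbb{R}_+^{\times})/C_0(\mathbb{R}_+^{\times}) = C(\Omega^*)$. Concretely, I would prove $\widehat{(UL^{\infty})} \subseteq \hat{\mathfrak{U}}$ from the fact that $UL^{\infty} \subseteq \mathfrak{U}$ at the level of the algebras themselves, and the reverse inclusion $\hat{\mathfrak{U}} \subseteq \widehat{(UL^{\infty})}$ from the identity $f \equiv U\xi_f \pmod{C_0(\mathbb{R}_+^{\times})}$ valid for every $f \in \mathfrak{U}$.

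For the first inclusion I would fix an arbitrary $f \in L^{\infty}(\mathbb{R}_+^{\times})$ and verify directly that $Uf$ meets both requirements in the definition of $\mathfrak{U}$. The derivative condition is immediate: since $x\,(Uf)(x) = \int_1^x f(t)\,dt$, the fundamental theorem of calculus gives $(x\,(Uf)(x))' = f(x) \in L^{\infty}(\mathbb{R}_+^{\times})$. The only point needing a word of justification is that $Uf$ lies in $C_{ub}(\mathbb{R}_+^{\times})$ rather than being merely continuous; here I would reuse the estimate $|(Uf)(x+h) - (Uf)(x)| \le 2h\|f\|_{\infty}/(x+h)$ obtained in the proof of Lemma 3.3, and observe that $x+h \ge 1$ forces the right-hand side to be at most $2h\|f\|_{\infty}$, so that $Uf$ is in fact Lipschitz, hence bounded and uniformly continuous. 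This yields $UL^{\infty} \subseteq \mathfrak{U}$, and passing to quotients modulo $C_0(\mathbb{R}_+^{\times})$ gives $\widehat{(UL^{\infty})} \subseteq \hat{\mathfrak{U}}$ at once.

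For the reverse inclusion I would take any $f \in \mathfrak{U}$, set $\xi_f = (xf)' \in L^{\infty}(\mathbb{R}_+^{\times})$, and invoke the identity $f(x) = (U\xi_f)(x) + f(1)/x$ already displayed above. Since $f(1)/x \in C_0(\mathbb{R}_+^{\times})$, this shows $f \equiv U\xi_f \pmod{C_0(\mathbb{R}_+^{\times})}$ with $U\xi_f \in UL^{\infty}$, so the class of $f$ in $C(\Omega^*)$ belongs to $\widehat{(UL^{\infty})}$; hence $\hat{\mathfrak{U}} \subseteq \widehat{(UL^{\infty})}$. Combining the two inclusions gives the asserted equality. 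I do not expect any genuine obstacle here: the content is entirely contained in the preceding computation, and the single step that is not completely formal, namely checking $Uf \in C_{ub}(\mathbb{R}_+^{\times})$, is settled by the Lipschitz estimate carried over from Lemma 3.3.
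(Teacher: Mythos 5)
Your proof is correct and follows essentially the same route as the paper: the paper derives the lemma from the identity $f \equiv U\xi_f \pmod{C_0(\mathbb{R}_+^{\times})}$ for $f \in \mathfrak{U}$ together with the remark that $Uf \in \mathfrak{U}$ for every $f \in L^{\infty}(\mathbb{R}_+^{\times})$, which are precisely your two inclusions. Your only addition is to spell out the step the paper calls obvious, namely the Lipschitz estimate giving $Uf \in C_{ub}(\mathbb{R}_+^{\times})$ and the fundamental-theorem-of-calculus computation of $(x(Uf)(x))^{\prime}$.
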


The next result is essential to prove our main theorem.
\begin{lem}
$\widehat{(WUL^{\infty})}$ is uniformly dense in $C(\Omega^*)$.
\end{lem}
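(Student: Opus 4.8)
The plan is to apply the Stone--Weierstrass theorem to the space $C(\Omega^*)$ of real-valued continuous functions on the compact Hausdorff space $\Omega^*$. Thus I want to show that $\widehat{(WUL^{\infty})}$ is a subalgebra of $C(\Omega^*)$ which contains the constant functions and separates the points of $\Omega^*$; uniform density follows at once. Recall from Lemma 3.3 that each $WUf$ with $f \in L^{\infty}(\mathbb{R}_+^{\times})$ indeed lies in $C_{ub}(\mathbb{R}_+^{\times})$, so that $\widehat{(WUL^{\infty})}$ is genuinely a subset of $C(\Omega^*) = C_{ub}(\mathbb{R}_+^{\times})/C_0(\mathbb{R}_+^{\times})$, and recall also the identity $\psi_{\omega}(f) = \overline{(WUf)}(\omega)$ noted just after Lemma 3.3, which evaluates $\widehat{(WUf)}$ at a point $\omega \in \Omega^*$.

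First I would check the algebraic structure. Linearity of $U$ gives $WU(f+g) = WUf + WUg$ and $WU(cf) = cWUf$, so $\widehat{(WUL^{\infty})}$ is a linear subspace. For closure under multiplication the key is that $W$ preserves products, $WUf \cdot WUg = W(Uf \cdot Ug)$, together with the fact that $Uf \cdot Ug$ again lands in $UL^{\infty}$ modulo $C_0(\mathbb{R}_+^{\times})$: indeed $Uf, Ug \in \mathfrak{U}$, so $Uf \cdot Ug \in \mathfrak{U}$ by Lemma 4.1, and $\hat{\mathfrak{U}} = \widehat{(UL^{\infty})}$ by Lemma 4.2, whence $Uf \cdot Ug \equiv Uh \pmod{C_0(\mathbb{R}_+^{\times})}$ for some $h \in L^{\infty}(\mathbb{R}_+^{\times})$. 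Applying $W$ and using that $W$ carries $C_0(\mathbb{R}_+^{\times})$ into $C_0(\mathbb{R}_+^{\times})$ (a function vanishing at infinity on $[1,\infty)$ is carried by $x \mapsto 2^x$ to one vanishing at infinity), I obtain $\widehat{(WUf)}\,\widehat{(WUg)} = \widehat{(WUh)}$, so the product stays in $\widehat{(WUL^{\infty})}$. The constants are present because for $f \equiv c$ one has $Uf(x) = c - c/x$, hence $WUf \equiv c \pmod{C_0(\mathbb{R}_+^{\times})}$.

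It remains to separate points, and here I would simply invoke Lemma 3.6. Given distinct $\omega, \omega' \in \Omega^*$, that lemma produces a Borel set $X$ with $\psi_{\omega}(I_X) \neq \psi_{\omega'}(I_X)$; by the identity above this reads $\widehat{(WUI_X)}(\omega) \neq \widehat{(WUI_X)}(\omega')$, and $\widehat{(WUI_X)} \in \widehat{(WUL^{\infty})}$, so the subalgebra separates points. With the three Stone--Weierstrass hypotheses in hand, the desired density follows. I expect no genuinely hard step to remain: the substantive content --- that functions of the form $WUf$ separate the points of $\Omega^*$ --- has already been established as the injectivity of $\Psi$ in Lemma 3.6, and the present argument is mostly bookkeeping. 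The one point demanding a little care is precisely the closure under products, since $W$ fails to map all of $C_{ub}(\mathbb{R}_+^{\times})$ into itself; it is Lemmas 4.1 and 4.2 that rescue the argument by confining the product $Uf \cdot Ug$ back inside $UL^{\infty}$ modulo $C_0(\mathbb{R}_+^{\times})$.
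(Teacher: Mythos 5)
Your proposal is correct and follows exactly the paper's own argument: apply the Stone--Weierstrass theorem, using Lemma 3.6 (injectivity of $\Psi$) to separate points and Lemmas 4.1 and 4.2 to obtain the algebra structure of $\widehat{(UL^{\infty})}$, hence of $\widehat{(WUL^{\infty})}$. The only difference is that you spell out the bookkeeping (constants, $W$ preserving products and $C_0(\mathbb{R}_+^{\times})$) that the paper leaves implicit.
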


\begin{proof}
We use the Stone-Weierstrass theorem to prove the theorem. First, $1 \in \widehat{(WUL^{\infty})}$ is obvious. The fact that $\widehat{(WUL^{\infty})}$ separates points in $\Omega^*$
follows from Lemma 3.6. Finally, by Lemma 4.1 and 4.2, $\widehat{(UL^{\infty})}$ is an algebra and hence 
$\widehat{(WUL^{\infty})}$ is also an algebra. Then we can apply the Stone-Weierstrass theorem and
get the result.
\end{proof}

With the aid of these results, we now prove our main theorem.
\begin{thm}
$\tilde{\mathcal{M}} = ex(\mathcal{M})$.
\end{thm}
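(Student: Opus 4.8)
The inclusion $ex(\mathcal{M}) \subseteq \tilde{\mathcal{M}}$ is already Theorem 4.1, so the plan is to establish the reverse inclusion, i.e. that each $\psi_{\omega}$ is an extreme point of $\mathcal{M}$. The decisive step will be to upgrade the surjectivity of the integral representation map $\overline{\Psi} : P(\Omega^*) \to \mathcal{M}$ from Corollary 4.1 to a bijection, that is, to prove that the representing measure of each $\psi \in \mathcal{M}$ is unique. Granting this, extremality follows quickly: if $\psi_{\omega} = \lambda\psi_1 + (1-\lambda)\psi_2$ with $\psi_1, \psi_2 \in \mathcal{M}$ and $0 < \lambda < 1$, then writing $\psi_i = \overline{\Psi}(\mu_i)$ and using that $\overline{\Psi}$ is affine gives $\overline{\Psi}(\lambda\mu_1 + (1-\lambda)\mu_2) = \psi_{\omega} = \overline{\Psi}(\delta_{\omega})$, whence $\lambda\mu_1 + (1-\lambda)\mu_2 = \delta_{\omega}$ by uniqueness. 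Since $\delta_{\omega}$ is an extreme point of $P(\Omega^*)$, this forces $\mu_1 = \mu_2 = \delta_{\omega}$, and therefore $\psi_1 = \psi_2 = \psi_{\omega}$.

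To prove injectivity of $\overline{\Psi}$, I would compute, for $\mu \in P(\Omega^*)$ and $f \in L^{\infty}(\mathbb{R}_+^{\times})$,
\[ \overline{\Psi}(\mu)(f) = \int_{\Omega^*} \psi_{\omega}(f)\, d\mu(\omega) = \int_{\Omega^*} \overline{WUf}(\omega)\, d\mu(\omega), \]
using the identity $\psi_{\omega}(f) = \overline{WUf}(\omega)$ recorded after Lemma 3.3. Thus $\overline{\Psi}(\mu)$ determines the integral against $\mu$ of every function in $\widehat{(WUL^{\infty})}$. By Lemma 4.3 this subalgebra is uniformly dense in $C(\Omega^*)$, so $\overline{\Psi}(\mu_1) = \overline{\Psi}(\mu_2)$ forces $\int g\, d\mu_1 = \int g\, d\mu_2$ first for all $g \in \widehat{(WUL^{\infty})}$ and then, since each $g \mapsto \int g\, d\mu_i$ is $\|\cdot\|_{\infty}$-continuous with $\mu_i$ a probability measure, for all $g \in C(\Omega^*)$. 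The Riesz representation theorem then yields $\mu_1 = \mu_2$, giving injectivity.

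With $\overline{\Psi}$ an affine bijection, I would note that it is weak*-to-weak* continuous: for fixed $f$ the function $\overline{WUf}$ lies in $C(\Omega^*)$, so $\mu \mapsto \int_{\Omega^*} \overline{WUf}\, d\mu$ is continuous, and hence so is $\mu \mapsto \overline{\Psi}(\mu)$. As $P(\Omega^*)$ is compact and $\mathcal{M}$ is Hausdorff, $\overline{\Psi}$ is an affine homeomorphism. Affine homeomorphisms carry extreme points to extreme points, and the extreme points of $P(\Omega^*)$ are exactly the Dirac measures $\delta_{\omega}$, $\omega \in \Omega^*$. Since $\overline{\Psi}(\delta_{\omega}) = \psi_{\omega}$, it follows that $ex(\mathcal{M}) = \{\psi_{\omega} : \omega \in \Omega^*\} = \tilde{\mathcal{M}}$.

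The only genuinely delicate point is the uniqueness of the representing measure; everything else is formal. Here the whole weight rests on Lemma 4.3: the density of $\widehat{(WUL^{\infty})}$ in $C(\Omega^*)$ is precisely what turns \emph{agreement on the range of $WU$} into \emph{agreement as measures}. I expect this to be the main obstacle in the sense that it is the step genuinely using the structure built in Section 3 together with the algebra Lemmas 4.1 and 4.2, rather than the soft Krein-Milman/Choquet formalism that surrounds it.
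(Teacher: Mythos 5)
Your proposal is correct and takes essentially the same route as the paper: both arguments rest on Corollary 4.1, the identity $\psi_{\omega}(f) = \overline{(WUf)}(\omega)$, and the density of $\widehat{(WUL^{\infty})}$ in $C(\Omega^*)$ (Lemma 4.3) to force any representing measure of $\psi_{\omega}$ to be $\delta_{\omega}$. The only difference is organizational: you prove injectivity of $\overline{\Psi}$ (the paper's Theorem 4.3) first and then deduce extremality formally, whereas the paper runs the same density argument inline on the combined measure $\lambda = \alpha\mu + (1-\alpha)\nu$ and records the uniqueness statement afterwards as an immediate consequence of that proof.
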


\begin{proof}
Since we have already shown that $ex(\mathcal{M}) \subseteq \tilde{\mathcal{M}}$ in Theorem 4.1, we have to prove only that $\tilde{\mathcal{M}} \subseteq ex(\mathcal{M})$. 
Let us assume that for some $\omega \in \Omega^*$ and some $\psi_1, \psi_2 \in \mathcal{M}$,
\[ \psi_{\omega} = \alpha \psi_1 + (1-\alpha) \psi_2, \quad 0 < \alpha < 1. \]
By Corollary 4.1, there exist probability measures $\mu, \nu$ on $\Omega^*$ such that
\[ \psi_1(f) = \int_{\Omega^*} \psi_{\omega^{\prime}}(f) d\mu(\omega^{\prime}), \quad
\psi_2(f) = \int_{\Omega^*} \psi_{\omega^{\prime}}(f) d\nu(\omega^{\prime}) \]
for every $f \in L^{\infty}(\mathbb{R}_+^{\times})$. Then if we put $\lambda = \alpha \mu + (1-\alpha) \nu \in P(\Omega^*)$, we have that
\begin{align}
& \psi_{\omega}(f) = \alpha \psi_1 + (1-\alpha) \psi_2 = \int_{\Omega^*}  \psi_{\omega^{\prime}}(f) d\lambda(\omega^{\prime}) \quad for \ every \ f \in L^{\infty}(\mathbb{R}_+^{\times}) \notag \\
&\Longleftrightarrow \overline{(WUf)}(\omega) = \int_{\Omega^*} \overline{(WUf)}(\omega^{\prime})
d\lambda(\omega^{\prime}) \quad for \ every \ f \in L^{\infty}(\mathbb{R}_+^{\times}) \notag \\
&\Longleftrightarrow \overline{g}(\omega) = \int_{\Omega^*} \overline{g}(\omega^{\prime}) d\lambda(\omega^{\prime}) \quad for \ every \ g \in WUL^{\infty} \notag \\
&\Longrightarrow h(\omega) = \int_{\Omega^*} h(\omega^{\prime}) d\lambda(\omega^{\prime}) \quad for \ every \ h \in C(\Omega^*). \notag
\end{align}
Therefore, $\lambda = \delta_{\omega}$ holds. Thus the support sets of $\mu$ and $\nu$ are $\{\omega\}$ and we conclude that $\mu = \nu = \delta_{\omega}$, that is, $\psi_1 = \psi_2 = \psi_{\omega}$. This completes the proof.
\end{proof}

Next theorem is an immediate consequence of the proof of Theorem 4.2.
\begin{thm}
For any $\psi \in \mathcal{M}$, a probability measure $\mu$ on $\Omega^*$ which represents $\psi$ is unique. Namely, $\psi$ is uniquely expressed in the form
\[ \psi(f) = \int_{\Omega^*} \psi_{\omega}(f) d\mu(\omega). \]
for some $\mu \in P(\Omega^*)$.
\end{thm}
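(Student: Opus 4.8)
The plan is to show that the representation map $\overline{\Psi} : P(\Omega^*) \to \mathcal{M}$ is injective, since the existence of a representing measure is already furnished by Corollary 4.1. The entire argument is a direct adaptation of the chain of implications appearing in the proof of Theorem 4.2, now applied to the difference of two candidate measures rather than to a convex combination.

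Concretely, I would suppose that $\mu, \nu \in P(\Omega^*)$ both represent a given $\psi \in \mathcal{M}$, so that
\[
\int_{\Omega^*} \psi_{\omega}(f)\, d\mu(\omega) = \int_{\Omega^*} \psi_{\omega}(f)\, d\nu(\omega)
\]
for every $f \in L^{\infty}(\mathbb{R}_+^{\times})$. Using the identity $\psi_{\omega}(f) = \overline{(WUf)}(\omega)$ noted after Lemma 3.4, this rewrites as
\[
\int_{\Omega^*} \overline{(WUf)}\, d\mu = \int_{\Omega^*} \overline{(WUf)}\, d\nu
\]
for all $f$; equivalently, writing $g = WUf$, the two integrals agree on every element of $\widehat{(WUL^{\infty})}$.

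The crucial input is Lemma 4.3, which asserts that $\widehat{(WUL^{\infty})}$ is uniformly dense in $C(\Omega^*)$. Since $\mu$ and $\nu$ are finite measures, integration against each is a bounded linear functional on $C(\Omega^*)$ with respect to the uniform norm; as these functionals agree on a dense subspace, they agree on all of $C(\Omega^*)$. Thus $\int_{\Omega^*} h\, d\mu = \int_{\Omega^*} h\, d\nu$ for every $h \in C(\Omega^*)$, and the uniqueness clause of the Riesz representation theorem on the compact Hausdorff space $\Omega^*$ forces $\mu = \nu$.

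I expect no serious obstacle here beyond bookkeeping, precisely because the hard analytic content---the density of $\widehat{(WUL^{\infty})}$ obtained via Stone--Weierstrass---was already isolated in Lemma 4.3 and exploited in Theorem 4.2. The only point requiring a moment's care is the passage from agreement on the dense subalgebra to agreement on all of $C(\Omega^*)$, which uses the finiteness (hence uniform-norm continuity) of the measures; after that, Riesz uniqueness closes the argument at once.
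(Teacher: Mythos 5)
Your proof is correct and follows essentially the same route as the paper: the paper states this theorem as an immediate consequence of the proof of Theorem 4.2, whose argument is exactly the chain you wrote out---agreement of the two representing measures on $\widehat{(WUL^{\infty})}$, passage to all of $C(\Omega^*)$ via the density supplied by Lemma 4.3, and then the uniqueness part of the Riesz representation theorem. You have simply made explicit the bookkeeping that the paper leaves implicit.
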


 A consequence of Theorem 4.3 is that $\overline{\Psi} : P(\Omega^*) \rightarrow \mathcal{M}$ is an affine homeomorphism. Also the isomorphism $\Psi$ between the two flows $(\Omega^*, \{\tau^s\}_{s \in \mathbb{R}})$ and $(\tilde{\mathcal{M}}, \{P_s^*\}_{s \in \mathbb{R}})$ established in Section 3 can be extended to an isomorphism between their closed convex hulls; we define linear operators $T_s$ in a similar way as $P_s$ 
for each $s \ge 0$.
\[ T_s : C_{ub}(\mathbb{R}_+^{\times}) \longrightarrow C_{ub}(\mathbb{R}_+^{\times}), \quad
(T_s f)(x) = f(x+s), \quad s \ge 0. \]
Let $T_s^*$ be their adjoint operators. Then we have that the two continuous flows $(P(\Omega^*), \{T_s^*\}_{s \in \mathbb{R}})$ and $(\mathcal{M}, \{P_s^*\}_{s \in \mathbb{R}})$ are isomorphic via $\Psi$:

\begin{thm}
$\Psi \circ T_s^* = P_s^* \circ \Psi$ holds for each $s \in \mathbb{R}$.
\end{thm}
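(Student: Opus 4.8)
The plan is to prove the identity by a direct change-of-variables computation, once three ingredients are pinned down. Throughout, $\Psi$ should be read as the extended barycenter map $\overline{\Psi} : P(\Omega^*) \to \mathcal{M}$, $(\overline{\Psi}\mu)(f) = \int_{\Omega^*} \psi_{\omega}(f)\, d\mu(\omega)$; recall that $(P_s^* \psi)(f) = \psi(P_s f)$ for $\psi \in \mathcal{M}$, and that $T_s^*$ is the adjoint of $T_s$ acting on $C(\Omega^*) = C_{ub}(\mathbb{R}_+^{\times})/C_0(\mathbb{R}_+^{\times})$. The first and decisive step is to identify $T_s^*$ concretely: I claim $T_s^* \mu = (\tau^s)_* \mu$, the pushforward of $\mu$ under the flow map $\tau^s$. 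This rests on the relation $\overline{T_s f}(\omega) = \overline{f}(\tau^s \omega)$ for $f \in C_{ub}(\mathbb{R}_+^{\times})$ and $\omega \in \Omega^*$, i.e.\ that $T_s$ acts on $C(\Omega^*)$ as precomposition with $\tau^s$. To establish it I would use the identification of $\omega = (\eta, t)$ with the ultrafilter on $\mathbb{R}_+^{\times}$ generated by $\{A + t : A \in \eta\}$ together with the suspension formula $\tau^s(\eta,t) = (\tau^{[t+s]}\eta,\, t+s-[t+s])$: a short computation shows that $\tau^s\omega$ is generated by $\{A + t + s : A \in \eta\}$, so that $(\tau^s\omega)\text{-}\lim_y f(y) = \omega\text{-}\lim_x f(x+s) = \overline{T_s f}(\omega)$. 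Granting this, $\int_{\Omega^*} g \, d(T_s^*\mu) = \int_{\Omega^*} (g \circ \tau^s)\, d\mu = \int_{\Omega^*} g\, d((\tau^s)_*\mu)$ for every $g \in C(\Omega^*)$, which is the claimed pushforward identity.

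With this in hand the theorem reduces to Theorem 3.2. Fix $s \ge 0$, $\mu \in P(\Omega^*)$ and $f \in L^{\infty}(\mathbb{R}_+^{\times})$. For the right-hand side,
\[ (P_s^* \overline{\Psi}\mu)(f) = (\overline{\Psi}\mu)(P_s f) = \int_{\Omega^*} \psi_{\omega}(P_s f)\, d\mu(\omega) = \int_{\Omega^*} (P_s^* \psi_{\omega})(f)\, d\mu(\omega) = \int_{\Omega^*} \psi_{\tau^s\omega}(f)\, d\mu(\omega), \]
the last equality being exactly Theorem 3.2. For the left-hand side, using $T_s^* \mu = (\tau^s)_* \mu$ and the change-of-variables formula for the pushforward applied to the continuous function $\omega \mapsto \psi_{\omega}(f) = \overline{WUf}(\omega)$,
\[ (\overline{\Psi}\, T_s^* \mu)(f) = \int_{\Omega^*} \psi_{\omega}(f)\, d((\tau^s)_*\mu)(\omega) = \int_{\Omega^*} \psi_{\tau^s\omega}(f)\, d\mu(\omega). \]
Both expressions coincide for every $f$, hence $\overline{\Psi}\, T_s^* \mu = P_s^* \overline{\Psi}\mu$, which is the assertion for $s \ge 0$.

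To upgrade to all $s \in \mathbb{R}$ I would invoke the flow structure: $\{\tau^s\}$ is a flow of homeomorphisms of $\Omega^*$, so $T_s$ is defined on $C(\Omega^*)$ as composition with $\tau^s$ for every $s$, the maps $T_s^* = (\tau^s)_*$ form a flow on $P(\Omega^*)$, and $\{P_s^*\}$ is a flow on $\mathcal{M}$; the case $s \ge 0$ together with the group laws $\tau^{s}\tau^{-s} = \mathrm{id}$ and $P_s^* P_{-s}^* = \mathrm{id}$ then propagates the identity to negative $s$. Alternatively, and more cleanly, one notes that $\overline{\Psi}\, T_s^*$ and $P_s^* \overline{\Psi}$ are both affine and weak*-continuous on $P(\Omega^*)$ and agree on the extreme points $\delta_{\omega}$ — since $T_s^*\delta_{\omega} = \delta_{\tau^s\omega}$ yields $\overline{\Psi}(T_s^*\delta_{\omega}) = \psi_{\tau^s\omega} = P_s^*\psi_{\omega}$ by Theorem 3.2 — hence agree on the weak*-dense set of finitely supported measures and therefore everywhere. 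The only genuinely nontrivial point is the identification $T_s^* = (\tau^s)_*$; once the suspension-flow bookkeeping behind $\overline{T_s f}(\omega) = \overline{f}(\tau^s\omega)$ is carried out, the rest is a routine interchange of the functional $\psi_{\omega}$ with the barycenter integral, powered by Theorem 3.2.
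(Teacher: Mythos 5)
Your proof is correct and follows exactly the route the paper intends: the paper states this theorem without any proof, as an immediate consequence of the pointwise equivariance $P_s^*\psi_{\omega} = \psi_{\tau^s\omega}$ established at the end of Section 3 together with the affine homeomorphism $\overline{\Psi} : P(\Omega^*) \rightarrow \mathcal{M}$, and your identification $T_s^*\mu = (\tau^s)_*\mu$ plus the barycenter change-of-variables is precisely the bookkeeping the paper leaves implicit. The only correction needed is a citation label: the equivariance result you invoke for $(P_s^*\psi_{\omega})(f) = \psi_{\tau^s\omega}(f)$ is Theorem 3.3 in the paper's numbering (Theorem 3.2 is the affine homeomorphism $V$ between $\mathcal{C}$ and $\mathcal{M}$).
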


We give below our main results of this section formulated in the summation setting.

\begin{Cor}
$ex(\mathcal{C}) = \tilde{\mathcal{C}}$.
\end{Cor}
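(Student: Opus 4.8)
The plan is to transfer the identity $ex(\mathcal{M}) = \tilde{\mathcal{M}}$ (Theorem 4.2) across the affine homeomorphism $V : \mathcal{M} \to \mathcal{C}$ established in Theorem 3.2. The key structural fact is that an affine homeomorphism between two compact convex sets carries extreme points to extreme points bijectively: if $V$ is affine and injective, then a point is extreme in $\mathcal{C}$ precisely when its preimage is extreme in $\mathcal{M}$.

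First I would recall that by Theorem 3.2, $V$ is an affine homeomorphism of $\mathcal{M}$ onto $\mathcal{C}$, and by Lemma 3.3, $V$ maps $\tilde{\mathcal{M}}$ onto $\tilde{\mathcal{C}}$ with $V\psi_{\omega} = \varphi_{\omega}$ for every $\omega \in \Omega^*$. Next I would verify the abstract principle that affine homeomorphisms preserve extreme points. Concretely, suppose $\varphi = V\psi \in ex(\mathcal{C})$ and write $\psi = \alpha\psi_1 + (1-\alpha)\psi_2$ with $0 < \alpha < 1$ and $\psi_1, \psi_2 \in \mathcal{M}$. Applying the affine map $V$ gives $\varphi = \alpha V\psi_1 + (1-\alpha) V\psi_2$; since $\varphi$ is extreme, $V\psi_1 = V\psi_2 = \varphi$, and injectivity of $V$ forces $\psi_1 = \psi_2 = \psi$, so $\psi \in ex(\mathcal{M})$. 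The reverse implication is symmetric, using that $V^{-1}$ is also an affine homeomorphism. Hence $V(ex(\mathcal{M})) = ex(\mathcal{C})$.

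I would then conclude by composing the two set equalities:
\[
ex(\mathcal{C}) = V(ex(\mathcal{M})) = V(\tilde{\mathcal{M}}) = \tilde{\mathcal{C}},
\]
where the first equality is the preservation of extreme points just argued, the second is Theorem 4.2, and the third is Lemma 3.3. This yields the desired corollary.

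I do not expect any genuine obstacle here, since all the substantive work has already been carried out: the difficult content lives in Theorem 4.2 (the inclusion $\tilde{\mathcal{M}} \subseteq ex(\mathcal{M})$, which required the Stone--Weierstrass machinery of Lemma 4.3 together with the uniqueness of Choquet-type representations) and in the homeomorphism statements of Section 3. The only point that deserves a line of care is confirming that $V$ is genuinely affine and bijective as a map of compact convex sets, so that the elementary lemma on extreme points applies in both directions; this is exactly the content of Theorem 3.2, which I am entitled to assume. Thus the corollary is a purely formal consequence of the results already established in the integral setting.
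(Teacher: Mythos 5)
Your proposal is correct and is precisely the route the paper intends: the corollary is stated without explicit proof as the transfer of Theorem 4.2 ($\tilde{\mathcal{M}} = ex(\mathcal{M})$) to the summation setting via the affine homeomorphism $V$ of Theorem 3.2 together with Lemma 3.3. Your spelled-out verification that affine bijections of compact convex sets preserve extreme points is exactly the elementary fact the paper leaves implicit.
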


\begin{Cor}
For any $\varphi \in \mathcal{C}$, there is a unique probability measure $\mu$ on $\Omega^*$ such that
\[ \varphi(f) = \int_{\Omega^*} \varphi_{\omega}(f) d\mu(\omega) \]
holds for every $f \in l^{\infty}$.
\end{Cor}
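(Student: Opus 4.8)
The plan is to transport the representation theorem for $\mathcal{M}$ (Theorem 4.3) across the affine homeomorphism $V$ of Theorem 3.2, exploiting the compatibility $V\psi_{\omega} = \varphi_{\omega}$ recorded in Lemma 3.3. Everything reduces to observing that the barycenter construction commutes with $V$.

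For existence, I would fix $\varphi \in \mathcal{C}$ and use the surjectivity of $V$ to choose $\psi \in \mathcal{M}$ with $V\psi = \varphi$. Applying Theorem 4.3 to $\psi$ yields a probability measure $\mu$ on $\Omega^*$ with $\psi(g) = \int_{\Omega^*} \psi_{\omega}(g)\, d\mu(\omega)$ for every $g \in L^{\infty}(\mathbb{R}_+^{\times})$. Recalling that $(V\psi)(f) = \psi(\tilde{f})$ with $\tilde{f}(x) = f([x])$, and that $\varphi_{\omega}(f) = (V\psi_{\omega})(f) = \psi_{\omega}(\tilde{f})$ by Lemma 3.3, I would then compute, for every $f \in l^{\infty}$,
\[ \varphi(f) = \psi(\tilde{f}) = \int_{\Omega^*} \psi_{\omega}(\tilde{f})\, d\mu(\omega) = \int_{\Omega^*} \varphi_{\omega}(f)\, d\mu(\omega), \]
which is exactly the asserted representation. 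The integrand $\omega \mapsto \varphi_{\omega}(f) = \overline{(WU\tilde{f})}(\omega)$ is continuous since $\Psi$ is a homeomorphism, so the integral is well defined.

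For uniqueness, suppose $\mu_1$ and $\mu_2$ are probability measures on $\Omega^*$ both representing $\varphi$. I would define $\psi_i \in \mathcal{M}$ by $\psi_i(g) = \int_{\Omega^*} \psi_{\omega}(g)\, d\mu_i(\omega)$, which lie in $\mathcal{M}$ via the barycenter map $\overline{\Psi}$ introduced before Corollary 4.1. The same computation as above gives $(V\psi_i)(f) = \int_{\Omega^*} \varphi_{\omega}(f)\, d\mu_i(\omega) = \varphi(f)$ for every $f \in l^{\infty}$, so $V\psi_1 = V\psi_2 = \varphi$. Since $V$ is injective (Theorem 3.2), $\psi_1 = \psi_2$, and then the uniqueness clause of Theorem 4.3 forces $\mu_1 = \mu_2$.

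The step that really carries the argument is the interchange $V\bigl(\int_{\Omega^*} \psi_{\omega}\, d\mu\bigr) = \int_{\Omega^*} V\psi_{\omega}\, d\mu$, i.e.\ that the barycenter map commutes with the affine homeomorphism $V$. This is the only point requiring attention, but it presents no genuine obstacle: since $V$ acts simply by precomposition $f \mapsto \tilde{f}$, pairing both sides against an arbitrary $f \in l^{\infty}$ reduces it to the displayed identity above. Thus the corollary is a formal transfer of Theorem 4.3 through $V$ together with Lemma 3.3, and I expect the write-up to be short.
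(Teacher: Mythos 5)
Your proposal is correct and is exactly the argument the paper intends: the corollary is stated as a formal transfer of Theorem 4.3 to the summation setting through the affine homeomorphism $V$ of Theorem 3.2 together with the compatibility $V\psi_{\omega}=\varphi_{\omega}$ of Lemma 3.3, which is precisely your existence-plus-uniqueness computation. The paper leaves this routine transfer implicit, and your write-up fills it in faithfully, including the one point needing care (that the barycenter map commutes with $V$, and that injectivity of $V$ plus uniqueness in Theorem 4.3 yields uniqueness of $\mu$).
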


\section{The additive property of density measures in $\tilde{\mathcal{C}}$}
In this section we study the additive property of elements of $\tilde{\mathcal{C}}$. Recall that we say that $\mu \in \tilde{\mathcal{C}}$ has the additive property if for any increasing sequence $A_1 \subset A_2 \subset \cdots \subset A_k \subset \cdots$ of $\mathcal{P}(\mathbb{N})$, there exists a set $B \subseteq \mathbb{N}$ such that

\quad (1) $\mu(B) = \lim_k \mu(A_k)$,

\quad (2) $\mu(A_k \setminus B) = 0 \quad for \ every \ k \in \mathbb{N}$. 
\medskip

 Now we need to prepare some notions relative to the topological dynamics $(\mathbb{N}^*, \tau)$ and the continuous flow $(\Omega^*, \{\tau^s\}_{s \in \mathbb{R}})$ before stating our main theorem. Recall that a point $\eta \in \mathbb{N}^*$ is called wondering for $\tau$ if there is an open neighborhood $U$ of $\eta$ such that the sets $\tau^{n}U$, $n$ is any integers, are mutually disjoint. We denote the set of all wondering points by $\mathcal{W}_d$. Also we denote by $\mathcal{D}_d$ the subset of $\mathbb{N}^*$ consisting of all points that does not return arbitrarily close to the initial point under negative iteration by $\tau$(i.e., $\eta \in \mathbb{N}^*$ is in $\mathcal{D}_d$ if and only if there exists a open neighborhood $U$ of $\eta$ such that $U \cap \{\tau^{-n}\eta : n \ge 1\} = \emptyset)$. This is equivalent to the condition that  the orbit $\{\tau^{-n} \eta : n \ge 0\}$ is a discrete space in its relative topology. $\mathcal{W}_d \subseteq \mathcal{D}_d$ is clear by the definitions.

 Next we define similar notions for the continuous flow $(\Omega^*, \{\tau^s\}_{s \in \mathbb{R}})$.
For a point $\omega \in \Omega^*$, $\omega$ is called wondering if there are open neighborhoods $U$ of $\omega$ and $V$ of $0 \in \mathbb{R}$ such that $U \cap \tau^s U = \emptyset$ for every s in $\mathbb{R} \setminus V$. We denote the set of all wondering points by $\mathcal{W}$. Similarly we denote by $\mathcal{D}$ the subset of $\Omega^*$ consisting of all points for which there are open neighborhoods $U$ of $\omega$ and $V$ of $0 \in \mathbb{R}$ such that $\tau^{-s} \omega \not\in U$ for every $s \in \mathbb{R}_+ \setminus V$. Note that this is equivalent to the condition that the orbit $\{\tau^{-s}\omega : s \ge 0\}$ is homeomorphic to $\mathbb{R}_+$ in its relative topology. It is easy by the definitions to check  that $\omega = (\eta, t) \in \mathcal{W}$ if and only if $\eta \in \mathcal{W}_d$, and $\omega = (\eta, t) \in \mathcal{D}$ if and only if $\eta \in \mathcal{D}_d$. In particular $\mathcal{W} \subseteq \mathcal{D}$ holds.

For simplicity we will use the symbol $|A \cap n| = |A \cap [1, n]|$ in the following proof. Recall that by Theorem 3.1, each density measure $\nu^{\mathcal{U}}$ in $\tilde{\mathcal{C}}$ is equal to $\nu_{\omega}$ for some $\omega = (\eta, t) \in \Omega^*$ defined as follows:
\[\nu_{\omega}(A) = \eta\mathchar`-\lim_n \frac{|A \cap [\theta \cdot 2^n]|}{\theta \cdot 2^n}, \quad A \in \mathcal{P}(\mathbb{N}), \]
where $\theta = 2^t$. Then we have the following theorem.

\begin{thm}
$\nu_{\omega}$ has the additive property if and only if $\omega \in \mathcal{D}$.
\end{thm}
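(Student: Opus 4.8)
My plan is to translate both the additive property and the hypothesis $\omega\in\mathcal{D}$ into a statement about a single non-commuting double limit, and then to separate the easy inclusion from the hard one. Write $\omega=(\eta,t)$, $M_n=[\theta\cdot 2^n]$ and $F_A(n)=|A\cap[1,M_n]|/M_n$, so that $\nu_\omega(A)=\eta\text{-}\lim_n F_A(n)$. The elementary estimate $|A\cap[1,M_{n-r}]|/M_n\le 2^{-r}$ shows that $F_A(n)$ is, up to an error $O(2^{-r})$, a geometric average of the densities $\delta_A(m)$ of $A$ in the dyadic blocks $(M_{m-1},M_m]$ taken over the top window $m\in(n-r,n]$. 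I would first record the combinatorial form of the hypothesis: since $\tau^{-n}\eta\in A^*$ iff $A+n\in\eta$, one has $\omega\in\mathcal{D}$ iff $\eta\in\mathcal{D}_d$ iff there is $A\in\eta$ with $A+n\notin\eta$ for every $n\ge 1$. Dynamically this says the backward orbit of $\eta$ is discrete, while its negation is recurrence of the backward orbit.

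Next I would put the additive property into its complementary, ``residual mass'' form: it holds for $\nu_\omega$ iff for every decreasing sequence $D_1\supseteq D_2\supseteq\cdots$ with $\nu_\omega(D_k)\downarrow e$ there is a set $R$ with $\nu_\omega(R)=e$ and $\nu_\omega(R\setminus D_k)=0$ for all $k$ (apply the definition to the increasing sequence $A_k=D_1\setminus D_k$ and set $R=D_1\setminus B$). The point of this form is that $\bigcap_k D_k$ contributes nothing at any fixed scale, so the block densities satisfy $\delta_{D_k}(m)\to 0$ as $k\to\infty$ for each fixed $m$, whereas $\nu_\omega(D_k)=\eta\text{-}\lim_n(\text{window average of }\delta_{D_k})\to e>0$. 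Thus $R$ exists precisely when one can choose a threshold $\kappa(m)\to\infty$ (so that $R$ agrees with $D_{\kappa(m)}$ on block $m$, which forces $\nu_\omega(R\setminus D_k)=0$ automatically) for which the window averages of $\delta_{D_{\kappa(m)}}(m)$ still converge to $e$ along $\eta$. Everything reduces to this diagonalization against the non-commuting double limit $\lim_k\eta\text{-}\lim_n$ versus $\eta\text{-}\lim_n\lim_k$.

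For the implication $\omega\in\mathcal{D}\Rightarrow$ additive property I would carry out the diagonalization. For each fixed $k$ the set $E_k=\{n:F_{D_k}(n)>e-\varepsilon\}$ lies in $\eta$ (because $\nu_\omega(D_k)\ge e$), and the task is to thread these sets by a single threshold $\kappa(n)\to\infty$ with $n\in E_{\kappa(n)}$ for $\eta$-many $n$; such a threshold yields an $R$ with $\nu_\omega(R)\ge e-\varepsilon$ and $\nu_\omega(R\setminus D_k)=0$, and trimming gives $\nu_\omega(R)=e$. The discreteness of the backward orbit — the ray $\{\tau^{-s}\omega\}$ being a proper embedding of $\mathbb{R}_+$, via the covariance $\Psi\circ\tau^s=P_s^*\circ\Psi$ of Theorem 3.3 — is what makes the family $\{E_k\}$ coherent enough to be threaded, because it prevents the contributions of $D_k$ at distinct orbit-times from reinforcing one another across the geometric window. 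When $\eta$ concentrates on a set with gaps tending to infinity the windows become disjoint super-blocks, the threading is immediate, and one recovers [4, Theorem 1]; the general discrete case, where the example promised in the introduction lives, is the genuinely new content.

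The reverse implication is the heart of the matter, and I would prove its contrapositive: if the backward orbit is recurrent ($A+n\in\eta$ for some $n\ge 1$, for every $A\in\eta$), I construct a decreasing $D_k$ with $\nu_\omega(D_k)\downarrow e>0$ for which no admissible $R$ has positive measure. The strategy is to let recurrence couple the orbit-times: along $\eta$ I would select return times $n_1<n_2<\cdots$ and, transporting block densities by $\tau^{-n_j}$ via Theorem 3.3, arrange the mass of $D_k$ so that the part of $D_k$ seen with high weight in the top window at a large scale is a faithful copy of mass that already sits inside $A_k=D_1\setminus D_k$ at a lower, recurrent scale. Any $R$ whose window averages reach $e$ must capture the high-weight top-window mass, and the coupling then forces it (mod $\nu_\omega$-null) to capture the lower-scale copy as well; but that copy lies in some $A_k$, so $\nu_\omega(R\setminus D_k)>0$, contradicting admissibility. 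Hence every admissible $R$ is null and the additive property fails. The main obstacle is exactly this construction: the recurrence is only qualitative, the return depths are unbounded and a priori unrelated, and there is no periodicity to exploit, so realizing the coupling requires a careful recursive construction along a well-chosen sequence of returns together with precise bookkeeping of how the block densities transform under the backward flow. I expect the most delicate step to be verifying the two requirements simultaneously — that $\nu_\omega(D_k)$ stays bounded below by $e$ while every threshold choice $\kappa(m)\to\infty$ drives the window average strictly below $e$ — and I would also confirm that the reductions above (to sequences of block form and to $R\subseteq D_1$) lose no generality.
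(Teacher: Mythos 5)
Your two preliminary reductions are fine: the ``residual mass'' form of the additive property is equivalent to the original one, and $\omega\in\mathcal{D}$ does translate to the existence of $A\in\eta$ with $A+n\notin\eta$ for all $n\ge1$. The flaw is in what you build on them. Your claim that the residual set $R$ exists \emph{precisely} when one can take $R$ to agree with $D_{\kappa(m)}$ on the block $(M_{m-1},M_m]$ for a single threshold $\kappa(m)\to\infty$ (so that $\nu_\omega(R\setminus D_k)=0$ is ``automatic'') is false, and the sufficiency plan resting on it is in substance a P-point argument, not a discreteness argument. If $\kappa(m)\to\infty$ pointwise, then the set of blocks on which $R$ carries mass is almost contained (up to finite sets) in each $E_k$, and the scales where $F_R(n)>e-\varepsilon$ lie within a bounded shift of that set; so a witness of your form forces some shift of a pseudo-intersection of the $E_k$'s to lie in $\eta$. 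Now take $\eta\in\mathcal{D}_d$ that is not a P-point (such points exist outright, whereas P-points need not exist), fix a discreteness witness $X\in\eta$ and a decreasing family $W_k\in\eta$ with no pseudo-intersection in $\eta$, and put $D_k=\bigcup_{m\in W_k\cap X}(M_{m-1},M_m]$, so $\nu_\omega(D_k)\ge\frac{1}{2}$. Any $R$ of your form would give, for some $j\ge0$, a member of $\eta$ almost contained in every $(W_k\cap X)+j$: for $j\ge1$ this forces $\tau^jX\in\eta$, contradicting discreteness, and for $j=0$ it is a pseudo-intersection of the $W_k$, contradicting their choice. So for these $D_k$ the additive property holds (by the theorem) but never via a witness of your prescribed shape; the sentence asserting that discreteness makes $\{E_k\}$ ``coherent enough to be threaded'' is exactly where a proof is needed, and none can exist for that statement. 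The paper's construction differs in precisely the respects your plan forbids: from $X=\{n_k\}$ it first proves the key combinatorial lemma that $Y_i=\{n_k\in X: n_k-n_{k-1}\ge i\}\in\eta$ for every $i$ (because $Y_i=\bigcap_{m<i}(X\setminus\tau^mX)$ and each $\tau^mX\notin\eta$); it then uses the coarse partition $I_k=([\theta 2^{n_{k-1}}],[\theta 2^{n_k}]]$ determined by $X$, with a threshold that is constant on coarse blocks and diverges only along $\eta$ (through nested $Z_i\subseteq Y_i$ in $\eta$), not pointwise; and the vanishing $\nu_\omega(A_i\setminus B)=0$ is then not automatic but is proved from the geometric estimate $2^{n_{k-1}-n_k}\le 2^{-j}$ valid when $n_k\in Z_j$ --- the same estimate that makes only the last coarse block matter when bounding $\nu_\omega(B)$.

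The necessity direction is also not proved: you outline a coupling construction and candidly list the obstacles (unbounded, unrelated return depths; the simultaneous verification of the two requirements), but none of it is carried out, even though you call this direction the heart of the matter. The paper settles it by reduction to the literature: if $\omega\notin\mathcal{D}$, i.e.\ every $X\in\eta$ has $\tau^kX\in\eta$ for some $k\ge1$, then the failure of the additive property is exactly [4, Theorem 6] in the case $t=0$, and the adaptation to arbitrary $t\in[0,1]$ is routine. Without that citation or a completed construction, both halves of your argument remain open, and the sufficiency half as designed cannot be completed.
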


\begin{proof}
(Sufficiency)
The following proof is based on the proof of [4, Theorem 1].
 We put $\omega = (\eta, t)$ and $\theta = 2^t$. Firstly, remark that by the assumption there exists some $X \in \eta$ such that $\{\tau^{-n} \eta : n \ge 1\} \cap X^* = \emptyset$. Fix such a set X, and put $X = \{n_k\}_{k=1}^{\infty}$. Then for every $m \ge 1$, $X \setminus (X \cap \tau^m X) \in \eta$ holds. Indeed, since $\tau^m X \not \in \eta$, then $X \cap \tau^m X \not \in \eta$.
Since $\eta$ is a ultrafileter, $X \setminus (X \cap \tau^m X) \in \eta$. Now put
\[Y_i := \{n_k \in X: n_k - n_{k-1} \ge i \}, \quad i \ge 1\]
then we have 
$Y_i \in \eta$ for every $i \ge 1$. Indeed, we can easily see that
\begin{align}
Y_i &= (X \setminus (\cup_{m=1}^{i-1} (X \cap \tau^m X)) \notag \\
&= \cap_{m=1}^{i-1} (X \setminus (X \cap \tau^m X)), \quad i = 1,2, \cdots. \notag
\end{align}
Then we get the result since $X \setminus (X \cap \tau^m X) \in \eta$ for every $m \ge 1$.

 Now we shall prove that $\nu_{\omega}$ has the additive property. Take any increasing sequence $\{A_i\}_{i \ge 1}$ of $\mathcal{P}(\mathbb{N})$ and put $\alpha = \lim_i \nu_{\omega}(A_i)$. We choose a decreasing sequence $\{Z_i\}_{i \ge 1}$ of $\omega$ such that for every $i \ge 1$, $Z_i \subseteq Y_i$ and 
\[\Bigl|\frac{|A_i \cap [\theta \cdot 2^n]|}{\theta \cdot 2^n} - \nu_{\omega}(A_i)\Bigr| < \frac{1}{i} \]
whenever $n \in Z_i$. For the set $X = \{n_k\}_{k=1}^{\infty}$ we consider the partition of $\mathbb{N}$ defined as follows:
\[\mathbb{N} = \cup_{k \ge 2} I_k, \quad I_k = ([\theta \cdot 2^{n_{k-1}}], [\theta \cdot 2^{n_k}]], \ k \ge 2. \]
We then define a set $B \subseteq \mathbb{N}$ as $B \cap I_k = A_i \cap I_k$ if $n_k \in Z_i \setminus Z_{i+1}$ and $B \cap I_k = \emptyset$ if $n_k \notin Z_1$. Fix any $i \ge 1$ and take any $n_k \in Z_i$, then $n_k \in Z_j \setminus Z_{j+1}$ for some $j \ge i$. Remark that since we assumed $Z_j \subseteq Y_j$, we have $\frac{2^{n_{k-1}}}{2^{n_k}} = 2^{n_{k-1}-n_k} \le 2^{-j}$ and $|B \cap ([\theta \cdot 2^{n_{k-1}}], [\theta \cdot 2^{n_k}]]| = |A_j \cap ([\theta \cdot 2^{n_{k-1}}], [\theta \cdot 2^{n_k}]|$. It follows that
\begin{align}
\frac{|B \cap [\theta \cdot 2^{n_k}]|}{\theta \cdot 2^{n_k}} &= \frac{|B \cap ([\theta \cdot 2^{n_{k-1}}], [\theta \cdot 2^{n_k}]]|}{\theta \cdot 2^{n_k}} + \frac{|B \cap [\theta \cdot 2^{n_{k-1}}]|}{\theta \cdot 2^{n_k}} \notag \\
&\le \frac{|A_j \cap ([\theta \cdot 2^{n_{k-1}}], [\theta \cdot 2^{n_k}]]|}{\theta \cdot 2^{n_k}} 
+ \frac{1}{2^j} \notag \\
&\le \frac{|A_j \cap [\theta \cdot 2^{n_k}]|}{\theta \cdot 2^{n_k}} + \frac{1}{2^j} 
\le \nu_{\omega}(A_j) + \frac{1}{j} + \frac{1}{2^j} \le \alpha + \frac{1}{i} + \frac{1}{2^i}. \notag
\end{align}
Thus since $i \ge 1$ is arbitrary, we have
\[\nu_{\omega}(B) = \eta\mathchar`-\lim_n \frac{|B \cap [\theta \cdot 2^{n}]|}{\theta \cdot 2^{n}} \le \alpha. \]
 
 Next we shall prove that $\nu_{\omega}(A_i \setminus B) =0$ for every $i =1,2, \cdots$. Fix a number $j \ge i$. For any $n_k \in Z_j$, there exists a $l \ge j$ such that $B \cap I_k = A_l \cap I_k \supseteq A_i \cap I_k$. Thus we have  $(A_i \setminus B) \cap I_k = \emptyset$. Hence
\[\frac{|(A_i \setminus B) \cap [\theta \cdot 2^{n_k}]|}{\theta \cdot 2^{n_k}} 
=\frac{|(A_i \setminus B) \cap [\theta \cdot 2^{n_{k-1}}]|}{\theta \cdot 2^{n_k}} 
\le \frac{\theta \cdot 2^{n_{k-1}}}{\theta \cdot 2^{n_k}} \le \frac{1}{2^l} \le \frac{1}{2^j}. \]
Therefore
\[\nu_{\omega}(A_i \setminus B) = \eta\mathchar`-\lim_n \frac{|(A_i \setminus B) \cap [\theta \cdot 2^{n}]|}{\theta \cdot 2^{n}} = 0. \]
Since $\nu_{\omega}(A_i \setminus B) = 0$ implies $\nu_{\omega}(A_i) \le \nu_{\omega}(B)$, thus 
$\alpha = \lim_i \nu_{\omega}(A_i) \le \nu_{\omega}(B)$. Hence $\nu_{\omega}(B) = \alpha$. We have shown that $\nu_{\omega}$ has the additive property.

(Necessity) It is sufficient to show that if for every element X of $\eta$, there exists some $k \ge 1$ such that $\tau^k X \in \eta$, then $\nu_{\omega}$ does not have the additive property. Especially in the case of $t=0$, this is [4, Theorem 6]. It is relatively easy to modify the proof so that it works for any $t \in [0,1]$. We have done. 
\end{proof}

 In the remainder of the section, we  consider the existence of a density measure in $\tilde{\mathcal{C}}$ with the additive property. As we have seen in Theorem 5.1, there is a close relation between the additive property of density measures in $\tilde{\mathcal{C}}$ and the topological dynamics $(\mathbb{N}^*, \tau)$ or the flow $(\Omega^*, \{\tau^s\}_{s \in \mathbb{R}})$. Following Chou [7], we say a set $A \subseteq \mathbb{N}$ is thin if $A \cap \tau^n A$ is a finite set for each positive integer n. It is obvious that a point $\eta \in \mathbb{N}^*$ is in $\mathcal{W}_d$ if and only if $\omega$ is contained in the closure of a thin set $A$, that is, $\eta$ contains a thin set $A$. Chou proved $\mathcal{W}_d$ is dense in $\mathbb{N}^*$ [7, Proposition1.2]. In particular, together with our result of Theorem 5.1, the existence of a density measure $\nu_{\omega}$ having the additive property follows immediately.
\begin{lem}
For a set $A = \{n_k\}_{k=1}^{\infty}$, $A$ is a thin set if and only if
\[\liminf_{k \to \infty} (n_k -n_{k-1}) = \infty. \]
\end{lem}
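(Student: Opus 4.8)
The statement to prove is the characterization of thin sets: for $A = \{n_k\}_{k=1}^{\infty}$ (with $n_1 < n_2 < \cdots$), $A$ is thin if and only if $\liminf_{k \to \infty}(n_k - n_{k-1}) = \infty$. Recall $A$ is thin means $A \cap \tau^m A$ is finite for every positive integer $m$, where $\tau$ is (the extension of) the shift $n \mapsto n+1$; concretely, $x \in A \cap \tau^m A$ means $x \in A$ and $x - m \in A$, i.e., both $x$ and $x - m$ are among the $n_k$. The plan is to unwind both conditions into statements about the gaps $d_k := n_k - n_{k-1}$ and check the two implications directly.

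For the forward direction, I would argue the contrapositive. Suppose $\liminf_k (n_k - n_{k-1}) = L < \infty$. Then there is some fixed positive integer $m \le L$ that occurs as a consecutive gap $n_k - n_{k-1}$ for infinitely many $k$ (since the gaps are positive integers and infinitely many of them are $\le L$, some particular value among $1, \dots, L$ must recur infinitely often). For each such $k$ we have $n_k - m = n_{k-1} \in A$, so $n_k \in A \cap \tau^m A$. This produces infinitely many elements of $A \cap \tau^m A$, so $A$ is not thin.

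For the converse, suppose $\liminf_k (n_k - n_{k-1}) = \infty$. Fix any positive integer $m$; I want to show $A \cap \tau^m A$ is finite. An element $x$ lies in $A \cap \tau^m A$ exactly when $x = n_k$ and $n_k - m = n_j$ for some indices $j < k$; since the gaps eventually exceed $m$, for all large $k$ we have $n_k - n_{k-1} > m$, and more generally every gap between a point of $A$ and its predecessor in $A$ exceeds $m$, so $n_k - m$ falls strictly between $n_{k-1}$ and $n_k$ and hence is not in $A$. Thus only finitely many $k$ can contribute to $A \cap \tau^m A$, giving finiteness. As $m$ was arbitrary, $A$ is thin.

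The argument is elementary once the combinatorial translation of ``thin'' into a gap condition is made explicit; the only point requiring a little care is the forward direction, where one must pass from $\liminf < \infty$ to the existence of a \emph{single} gap value $m$ attained infinitely often, rather than merely infinitely many small gaps of possibly varying size — this is the step I would expect a referee to want stated cleanly, and it is handled by the pigeonhole observation that the finitely many integers in $\{1, \dots, L\}$ cannot collectively be hit infinitely often unless one of them is.
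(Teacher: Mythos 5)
Your proof is correct and follows essentially the same route as the paper: the paper likewise treats the direction ``$\liminf = \infty$ implies thin'' as immediate and proves the other direction by contradiction, observing that if $\liminf_k (n_k - n_{k-1}) = l_A < \infty$ then $A \cap \tau^{l_A}A$ is infinite. The only cosmetic difference is that where you invoke pigeonhole to find a single gap value $m \le L$ recurring infinitely often, the paper uses the liminf value $l_A$ itself, which for an integer-valued sequence is automatically attained infinitely often; both observations serve the same purpose.
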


\begin{proof}
Sufficiency is obvious. Suppose that $A$ is thin and
\[\liminf_{k \to \infty} (n_k -n_{k-1}) = l_A < \infty\]
then $A \cap \tau^{l_A} A$ is an infinite set. It contradicts the assumption that $A$ is thin.
\end{proof}

We give the following characterization of a density measure $\nu_{\omega}$ with $\omega$ in $\mathcal{W}$. Recall that $\nu_{\omega} = \nu^{[2^{\omega}]}$ for $\omega$ in $\Omega^*$.
\begin{thm}
For $\omega = (\eta, t)$ in $\Omega^*$, $\nu_{\omega}$ has the additive property and the associated free ultrafilter $\mathcal{U} = [2^{\omega}]$ contains a set $X=\{n_k\}_{k=1}^{\infty}$ such that
\[\lim_{k \rightarrow \infty} \frac{n_{k+1}}{n_k} = \infty \]
if and only if $\omega \in \mathcal{W}$.
\end{thm}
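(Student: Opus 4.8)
The plan is to translate each of the two conditions on the ultrafilter $\mathcal{U} = [2^{\omega}]$ into a condition on $\eta$, and then invoke the dictionary $\omega = (\eta,t) \in \mathcal{W} \iff \eta \in \mathcal{W}_d \iff \eta$ contains a thin set, together with Lemma 5.1 and Theorem 5.1. The basic tool is a membership criterion for the image ultrafilter: pushing the basis $\{A + t : A \in \eta\}$ of $\omega$ successively through $x \mapsto 2^x$ and $x \mapsto [x]$ shows that $\mathcal{U}$ is generated by the basis $\{\{[\theta \cdot 2^m] : m \in A\} : A \in \eta\}$, so a set $X \subseteq \mathbb{N}$ lies in $\mathcal{U}$ if and only if there is $A \in \eta$ with $\{[\theta \cdot 2^m] : m \in A\} \subseteq X$. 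The whole argument then rests on the elementary asymptotic relation $\log_2\bigl([\theta \cdot 2^{m'}]/[\theta \cdot 2^{m}]\bigr) = (m' - m) + o(1)$, which converts the multiplicative gaps $n_{k+1}/n_k$ of a set $X \in \mathcal{U}$ into the additive gaps $m_{j+1}-m_j$ of the associated set $A \in \eta$, that is, into the thinness condition of Lemma 5.1.

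For the direction $\omega \in \mathcal{W} \Rightarrow$ both conditions: since $\mathcal{W} \subseteq \mathcal{D}$, Theorem 5.1 immediately gives that $\nu_{\omega}$ has the additive property. Moreover $\eta \in \mathcal{W}_d$ contains a thin set $A = \{m_j\}_{j \ge 1}$ (enumerated increasingly), which by Lemma 5.1 satisfies $m_{j+1} - m_j \to \infty$. Taking $X = \{[\theta \cdot 2^{m_j}] : j \ge 1\} \in \mathcal{U}$ and writing $n_j = [\theta \cdot 2^{m_j}]$, these are strictly increasing, and from $\theta \cdot 2^{m_{j+1}} - 1 < n_{j+1} \le \theta \cdot 2^{m_{j+1}}$ one obtains $n_{j+1}/n_j > 2^{m_{j+1} - m_j} - 1/(\theta \cdot 2^{m_j}) \to \infty$, producing the required set $X$.

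The substantive direction is the converse. Assume $X = \{n_k\}_{k \ge 1} \in \mathcal{U}$ with $n_{k+1}/n_k \to \infty$. By the membership criterion there is an increasing $A = \{m_j\}_{j \ge 1} \in \eta$ with $\{[\theta \cdot 2^{m_j}]\} \subseteq X$; since $m \mapsto [\theta \cdot 2^{m}]$ is strictly increasing on the integers, each $[\theta \cdot 2^{m_j}]$ equals some $n_{k_j}$ with $k_j$ strictly increasing in $j$. I would then estimate $2^{m_{j+1}-m_j} = (\theta \cdot 2^{m_{j+1}})/(\theta \cdot 2^{m_j}) > n_{k_{j+1}}/(n_{k_j}+1) \ge n_{k_j+1}/(n_{k_j}+1)$, and since $k_j \to \infty$ the hypothesis $n_{k+1}/n_k \to \infty$ forces this lower bound to tend to $\infty$, whence $m_{j+1}-m_j \to \infty$. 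By Lemma 5.1 the set $A$ is thin, so $\eta \in \mathcal{W}_d$ and $\omega \in \mathcal{W}$. I would record that this uses condition (b) only; condition (a) is then automatic, because $\omega \in \mathcal{W} \subseteq \mathcal{D}$ already yields the additive property by Theorem 5.1.

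The main obstacle is the bookkeeping in the converse: one must pass from the full sequence $X$ to the subsequence indexed by $A$ and check that the multiplicative-gap hypothesis survives restriction to a subsequence, which it does precisely because it is assumed as a genuine limit rather than merely a $\liminf$, and one must control the floor functions so that the additive error terms are absorbed into the $o(1)$ above. Setting up the image-ultrafilter membership criterion cleanly at the outset is what reduces both directions to these routine estimates.
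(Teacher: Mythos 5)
Your proposal is correct and follows essentially the same route as the paper's own proof: both reduce membership in $\mathcal{U} = [2^{\omega}]$ to the basis $\{[\theta \cdot 2^A] : A \in \eta\}$, invoke Theorem 5.1 (via $\mathcal{W} \subseteq \mathcal{D}$) for the additive property, and use Lemma 5.1 to convert thinness of $A \in \eta$ into the multiplicative gap condition on $X$ through the relation $[\theta \cdot 2^{n_{k+1}}]/[\theta \cdot 2^{n_k}] \approx 2^{n_{k+1}-n_k}$. Your treatment is in fact slightly more careful than the paper's in the converse direction, where you make explicit the subsequence bookkeeping ($[\theta \cdot 2^{m_j}] = n_{k_j}$ with $k_j$ strictly increasing) that the paper compresses into a single $\liminf$ inequality.
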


\begin{proof}
Note that a free ultrafilter $\mathcal{U} = [2^{\omega}] = [\theta 2^{\eta}]$ is generated by the basis $\{[\theta \cdot 2^A] : A \in \eta\}$, where $\theta = 2^t$ and $[\theta \cdot 2^A] = \{[\theta \cdot 2^n] : n \in A\}$ for each $A \in \eta$. First we prove sufficiency. Since $\omega \in \mathcal{W} \subseteq \mathcal{D}$, $\nu_{\omega}$ has the additive property by Theorem 5.1. Take any thin set $A = \{n_k\}_{k=1}^{\infty}$ contained in $\eta$, Put $X = [\theta \cdot 2^A] = \{m_k\}_{k=1}^{\infty}$, then $X \in \mathcal{U}$. By Lemma 5.1 we have that
\[\liminf_{k \to \infty} \frac{m_{k+1}}{m_k} = \liminf_{k \to \infty} \frac{[\theta \cdot 2^{n_{k+1}}]}{[\theta \cdot 2^{n_k}]} = 2^{\liminf_{k \to \infty}(n_{k+1} - n_k)} = \infty. \]

 Conversely, Assume that $\mathcal{U}$ contains a set $X = \{m_k\}_{k=1}^{\infty}$ with $\lim_{k \rightarrow \infty} \frac{m_{k+1}}{m_k} = \infty$. Since there is a set $A = \{n_k\}_{k=1}^{\infty}$ in $\eta$ such that $[\theta \cdot 2^A] \subseteq X$, then
\[\infty = \liminf_{k \to \infty} \frac{m_{k+1}}{m_k} \le \liminf_{k \to \infty} \frac{[\theta \cdot 2^{n_{k+1}}]}{[\theta \cdot 2^{n_k}]} = 2^{\liminf_{k \to \infty}(n_{k+1} -n_k)}. \]
Hence $\liminf_{k \to \infty}(n_{k+1} - n_k) = \infty$, that is, By Lemma 5.1 $A$ is a thin set. Then $\omega \in \mathcal{W}$.
\end{proof}

 In particular, this result is contained in [4, Theorem 1], which we remarked at Section 1. Then it is natural to ask that whether there exists a density measure $\nu_{\omega} \in \tilde{\mathcal{C}}$ with the additive property and the associated ultrafilter does not contain a set $\{n_k\}_{k=1}^{\infty}$ with $\lim_{k \rightarrow \infty} \frac{n_{k+1}}{n_k} = \infty$. The answer to this question is affirmative. Notice that from the above theorem, it is equivalent to $\mathcal{W} \subsetneq \mathcal{D}$ or, equivalently, $\mathcal{W}_d \subsetneq \mathcal{D}_d$.

\begin{thm}
$\mathcal{W}_d \subsetneq \mathcal{D}_d$.
\end{thm}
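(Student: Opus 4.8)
The plan is to reduce the statement to the construction of a single free ultrafilter and then build it by a finite-intersection-property argument. First I would translate both sides into combinatorial membership conditions. By the characterization recalled just before Lemma 5.1, $\eta \in \mathcal{W}_d$ if and only if $\eta$ contains a thin set, so $\eta \notin \mathcal{W}_d$ means $\eta$ contains no thin set. Unwinding the definition of $\mathcal{D}_d$ (the basic neighborhoods of $\eta$ in $\mathbb{N}^*$ are the clopen sets $X^*$, and $\tau^{-n}\eta \in X^*$ iff $X + n \in \eta$, using $\tau^n X = X + n$ as in the necessity proof of Theorem 5.1), one gets that $\eta \in \mathcal{D}_d$ if and only if there is a set $X \in \eta$ with $X + n \notin \eta$ for every $n \ge 1$. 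Since $\mathcal{W}_d \subseteq \mathcal{D}_d$ is already known, it suffices to produce one free ultrafilter $\eta$ containing such a witness $X$ while containing no thin set.

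The approach is to consider the family $\mathcal{F} = \{X\} \cup \{(X+n)^c : n \ge 1\} \cup \{T^c : T \text{ thin}\}$ and extend it to a free ultrafilter. If $\mathcal{F}$ has the finite intersection property with all finite intersections infinite, then any free $\eta \supseteq \mathcal{F}$ does the job: $X \in \eta$ together with $(X+n)^c \in \eta$ gives $\eta \in \mathcal{D}_d$, while $T^c \in \eta$ for all thin $T$ gives $\eta \notin \mathcal{W}_d$. Writing $X_F = \{x \in X : x - n \notin X \text{ for all } n \in F\}$, a typical finite intersection has the form $X_F \setminus (T_1 \cup \cdots \cup T_r)$ with $F$ finite and each $T_i$ thin; since $X_F \supseteq X_{\{1,\ldots,N\}}$ whenever $F \subseteq \{1,\ldots,N\}$, the whole problem reduces to showing that for every $N$ the set $X_{\{1,\ldots,N\}} = \{x \in X : [x-N, x-1] \cap X = \emptyset\}$ is not a union of finitely many thin sets.

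For the construction of $X$ I would rely on the elementary lemma that a set containing, for some \emph{fixed} difference $d$, arithmetic progressions of common difference $d$ and unbounded length located arbitrarily far out cannot be covered by finitely many thin sets: once the gaps of a thin set $T_i$ exceed $2dr$, $T_i$ meets any sufficiently remote interval of length $(L-1)d$ in at most about $L/(2r)$ points, so $r$ thin sets cannot exhaust a length-$L$ progression for $L$ large. Accordingly I would take $X$ to be a disjoint union, spread increasingly far apart in $\mathbb{N}$, of blocks $B_{s,L} = \{0, 2^s, 2\cdot 2^s, \ldots, (L-1)2^s\} + c_{s,L}$ ranging over all pairs $(s,L)$ with $s, L \ge 1$, the translates $c_{s,L}$ chosen so that consecutive blocks are separated by gaps tending to infinity. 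Then, fixing $N$ and choosing any $s$ with $2^s > N$, every element of every block $B_{s,L}$ is preceded in $X$ by an empty stretch of length at least $2^s - 1 \ge N$, so $B_{s,L} \subseteq X_{\{1,\ldots,N\}}$; hence $X_{\{1,\ldots,N\}}$ contains progressions of the fixed difference $2^s$ and unbounded length $L$, and is not finitely-thin-coverable. This yields the finite intersection property with infinite intersections, and the desired $\eta$.

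The main obstacle is exactly the construction of $X$: one must realize arbitrarily long progressions of a common difference at every scale $N$ \emph{simultaneously}, since the naive candidates collapse. If $X$ were a union of long runs, or a single nested self-similar pattern, then for $N$ past its finest scale $X_{\{1,\ldots,N\}}$ degenerates to the sparse set of block-starts, which is thin and destroys the finite intersection property; this is the reason the earlier "super-increasing" examples all land in $\mathcal{W}_d$. Indexing the blocks by both a scale $s$ and a length $L$, so that the difference $2^s$ stays fixed while $L \to \infty$, is precisely what keeps $X_{\{1,\ldots,N\}}$ non-coverable for every $N$. The remaining points — that the inter-block gaps can be made to tend to infinity without creating spurious short gaps, and that the finite intersections are infinite rather than merely nonempty so that $\eta$ is free — are routine and I would dispatch them briefly.
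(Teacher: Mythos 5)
Your proof is correct, and although its overall skeleton agrees with the paper's --- both reduce the theorem to exhibiting one set $X$ such that, for every $N$, the set $X_N'$ of elements of $X$ preceded by a gap greater than $N$ cannot be captured by thin sets, and then obtain the desired point by a compactness argument --- the two key steps are genuinely different, and yours is more careful on the decisive point. The paper works topologically: it sets $\Gamma=\mathbb{N}^*\setminus\mathcal{W}_d$, uses invariance of $\Gamma$ and compactness of $\hat{X}=X^*\cap\Gamma$; this is precisely your finite-intersection-property argument read through Stone duality, so that part is essentially the same. The divergence is in what gets verified about $X_N'$. Since $\mathcal{W}_d=\bigcup\{T^* : T \text{ thin}\}$, compactness shows that $(X_N')^*$ meets $\Gamma$ if and only if $X_N'$ is not covered, mod finite sets, by \emph{finitely many} thin sets; a finite union of thin sets need not be thin, so this is strictly stronger than non-thinness. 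The paper's chain of equivalences reduces instead to ``$X_N'$ is not thin for every $N$,'' i.e.\ to the criterion that $Y_X$ is infinite, which is insufficient as stated: a union of widely spaced pairs $\{b_{n,j},\, b_{n,j}+n!\}$ ($n,j\ge 1$) has $Y_X\supseteq\{n! : n\ge 1\}$ infinite, yet every $X_N'$ splits into two thin sets. Your AP-covering lemma --- finitely many thin sets cannot cover arbitrarily long, arbitrarily remote progressions of a fixed common difference, because thin sets have gaps tending to infinity by Lemma 5.1 --- certifies exactly the stronger ``no finite cover'' property, and your multi-scale block construction is built so that the lemma applies at every $N$; this closes the gap cleanly. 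One caveat: your aside that nested self-similar candidates collapse is not quite fair to the paper's example, since $\{2^n+2^k : 0\le k<n\}$ does satisfy the strong property; but it contains no long fixed-difference progressions, so your lemma does not apply to it, and it requires a separate argument (pigeonhole: fix $r+1$ admissible exponents $k_1<\cdots<k_{r+1}$; in a covering by $r$ thin sets, each pairwise difference $2^{k_i}-2^{k_j}$ is eventually forbidden within a single class, forcing $r+1$ distinct classes among $r$). In sum, your proposal is a correct, self-contained proof that in addition repairs an imprecision in the paper's own argument, at the cost of a different witness set matched to your lemma.
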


\begin{proof}
We put $\Gamma = \mathbb{N}^* \setminus \mathcal{W}_d$. Since $\mathcal{W}_d$ is an open invariant set, $\Gamma$ is a closed invariant subset. For any $A \subseteq \mathbb{N}$, we denote $A^* \cap \Gamma$ by $\hat{A}$. Then it is sufficient to show that there exists a set $X \subseteq \mathbb{N}$ such that
\[\hat{X} \subsetneq \cup_{i=1}^l \tau^i \hat{X} \]
for every $l \ge 1$. Indeed, if it is true, it follows that by the compactness of $\hat{X}$, $\hat{X} \setminus (\cup_{i=1}^{\infty} \tau^i \hat{X}) \not= \emptyset$, and obviously any point in the set is contained in $\mathcal{D}_d \setminus \mathcal{W}_d$.

 Take a set $X \subseteq \mathbb{N}$ and write $X = \{n_k\}_{k=1}^{\infty}$. We put
\[Y_X = \{m \in \mathbb{N} : |\{ k \ge 2 : n_k - n_{k-1} = m\}| = \infty \} \]
and notice that 
\[X \setminus (\cup_{i=1}^l \tau^i X) = \{n_i \in X : n_k -n_{k-1} > l\} \]
and
\begin{align}
\hat{X} \setminus (\cup_{i=1}^l \tau^i \hat{X}) \not= \emptyset &\Longleftrightarrow (X \setminus \cup_{i=1}^l \tau^i X) \ \widehat{} \not= \emptyset \notag \\
&\Longleftrightarrow X \setminus \cup_{i=1}^l \tau^i X \not\subseteq \mathcal{W}^{\tau} \notag \\
&\Longleftrightarrow X \setminus \cup_{i=1}^l \tau^i X \ is \ not \ a \ thin \ set. \notag
\end{align}
Hence we obtain that $\hat{X} \subsetneq \cup_{i=1}^l \tau^i \hat{X}$ for any $l \ge 1$ if and only if $ \{n_k \in X : n_k -n_{k-1} > l\}$ is not a thin set for any $l \ge 1$, i.e., $Y_X$ is an infinite set. We can see easily that such a set $X$ exists. For example, Put $X = \{2^n +2^k : n \ge 0, 0 \le k <n\}$. We have done.
\end{proof}

 Therefore for any point $\omega \in \mathcal{D} \setminus \mathcal{W}$, the density measure $\nu_{\omega}$ give an example having the additive property but does not satisfy the sufficient condition of [4, Theorem 1].
\bibliographystyle{jplain}
\bibliography{myrefs}

\end{document}